%  platex ƒtƒ@ƒCƒ‹
\documentclass[11pt]{article}
\usepackage{amssymb,amsfonts}%amsLatex
\usepackage{amsmath,amsthm,amsxtra}

\usepackage{graphicx}
\usepackage{amscd}
\usepackage{ulem}
\usepackage{makeidx}
\usepackage{fancybox}

\usepackage[linktocpage=true, colorlinks=true, linkcolor=blue, citecolor=red, urlcolor=green]{hyperref}

\setlength{\oddsidemargin}{0.0 truein}
\setlength{\evensidemargin}{0.0 truein}
\setlength{\textwidth}{6.25 truein}
\setlength{\headheight}{0 pt}
\setlength{\headsep}{15 pt}
\setlength{\topmargin}{0.25 in}
\setlength{\textheight}{8.5 in}
\setlength{\hoffset}{0 in} 
\setlength{\voffset}{0 in}

\newcommand{\ccc}{{\mathbf C}}

\newcommand{\nnn}{{\mathbf N}}

\newcommand{\zzz}{{\mathbf Z}}

\newtheorem{prop}{Proposition}[section]
\newtheorem{lemma}{Lemma}[section]

\numberwithin{equation}{section}

\pagestyle{myheadings}

%\makeindex

\begin{document}

\title{A note on Appell's functions related to the denominators of 
affine Lie superalgebras $\widehat{sl}(2|1)$ and 
$\widehat{osp}(3|2)$}

\author{\footnote{12-4 Karato-Rokkoudai, Kita-ku, Kobe 651-1334, 
Japan, \qquad
wakimoto.minoru.314@m.kyushu-u.ac.jp, \hspace{5mm}
wakimoto@r6.dion.ne.jp 
}{ Minoru Wakimoto}}

\date{\empty}

\maketitle

\begin{center}
Abstract
\end{center}

In this paper we consider the simplest class of Appell's functions,
and obtain their explicit formulas and modular transformation 
properties and asymptotic behaviors.

%\tableofcontents

\section{Introduction}
%\markright{\S \ref{sec:Introduction}. Introduction}
\label{sec:Introduction}
%(label=sec:Introduction)
%(line=111)

%\medskip

Functions defined for $(m,s) \in \frac12\nnn \times \frac12 \zzz$ \, 
by 
\begin{subequations}
{\allowdisplaybreaks
\begin{eqnarray}
\Phi^{(\pm)[m,s]}_1(\tau,z_1,z_2,t) \hspace{2mm}
&:=& 
e^{-2\pi imt} \sum_{j \in \zzz} \, 
(\pm 1)^j \, \frac{e^{2\pi imj(z_1+z_2)+2\pi isz_1}
q^{mj^2+sj}}{1-e^{2\pi iz_1}q^j}
\label{App:eqn:2023-503a1}
\\[1mm]
\Phi^{(\pm)[m,s]}_2(\tau,z_1,z_2,t) \hspace{2mm}
&:=&
e^{-2\pi imt} \sum_{j \in \zzz} \, 
(\pm 1)^j \, \frac{e^{-2\pi imj(z_1+z_2)-2\pi isz_2}
q^{mj^2+sj}}{1-e^{-2\pi iz_2}q^j}
\label{App:eqn:2023-503a2}
\\[1mm]
\Phi^{(\pm)[m,s]}(\tau,z_1,z_2,t) \hspace{2mm}
&:=& 
\Phi^{(\pm)[m,s]}_1(\tau,z_1,z_2,t) 
\, - \, 
\Phi^{(\pm)[m,s]}_2(\tau,z_1,z_2,t) 
\label{App:eqn:2023-503a3}
\\[2mm]
\Phi^{(\pm)[m,s] \, \ast}(\tau,z_1,z_2,t) 
&:=&
\Phi^{(\pm)[m,s]}_1(\tau,z_1,z_2,t) 
\, + \, 
\Phi^{(\pm)[m,s]}_2(\tau,z_1,z_2,t) 
\label{App:eqn:2023-503a4}
\end{eqnarray}}
\end{subequations}

\vspace{-2mm}

\noindent
are mock theta functions of rank 1.
We write simply $\Phi^{[m,s]}_i$ and $\Phi^{[m,s]}$ for 
$\Phi^{(+)[m,s]}_i$ and $\Phi^{(+)[m,s]}$ respectively.

The aim of this paper is to show that $\Phi^{[1,0]}_1(\tau,z_1,z_2,0)$
and $\Phi^{(-)[\frac12,\frac12]}_1(\tau,z_1,z_2,0)$, with suitable 
specialization of $(z_1,z_2)$, can be written explicitly in terms of 
the Mumford's theta functions.
Our method is very simple as follows. Functions 
$\Phi^{[1,0]}(\tau,z_1,z_2,0)= \big[\Phi^{[1,0]}_1-\Phi^{[1,0]}_2\big]
(\tau,z_1,z_2,0)$ and 
$\Phi^{(-)[\frac12,\frac12] \, \ast}(\tau,z_1,z_2,0)= 
\big[\Phi^{(-)[\frac12,\frac12]}_1+\Phi^{(-)[\frac12,\frac12]}_2\big]
(\tau,z_1,z_2,0)$ are known by the denominator identities of the 
affine Lie superalgebras $\widehat{sl}(2|1)$ and $\widehat{osp}(3|2)$ 
respectively. Then if we know explicit formulas for 
$\big[\Phi^{[1,0]}_1+\Phi^{[1,0]}_2\big](\tau,z_1,z_2,0)$ and 
$\big[\Phi^{(-)[\frac12,\frac12]}_1-\Phi^{(-)[\frac12,\frac12]}_2\big]
(\tau,z_1,z_2,0)$ for some $(z_1,z_2)$, then we can know explicit 
formulas for $\Phi^{[1,0]}_1(\tau,z_1,z_2,0)$ and 
$\Phi^{(-)[\frac12,\frac12]}_1(\tau,z_1,z_2,0)$ for such $(z_1,z_2)$.
In this paper, we will show that this method works well for 
$(z_1,z_2)=(z+a\tau+b, z-a\tau-b)$ where $a,b \in \frac12 \zzz$.

In this paper we use notations from \cite{W2022a}, 
\cite{W2022b}, \cite{W2022c}, \cite{W2022d}, \cite{W2022e}, 
\cite{W2022f} and \cite{W2023a}.

\section{$\Phi^{[1,0]}_1(\tau, \, z+a\tau+b, \, z-a\tau-b, \, 0)$}
%\markright{\S \ref{sec:Phi1(10)}. 
%$\Phi^{[1,0]}_1(\tau, \, z+a\tau+b, \, z-a\tau-b, \, 0)$}
\label{sec:Phi1(10)}
%(label=sec:Phi1(10))
%(line=182)

%\medskip

%(line=186)
%(label=App:lemma:2023-501a)

%(label=App:eqn:2023-502a1)
%(label=App:eqn:2023-502a2)
\begin{lemma} \,\ 
\label{App:lemma:2023-501a}
Let $a \, \in \, \frac12 \zzz_{\geq 0}$ and $b \, \in \, \frac12 \zzz$. Then
\begin{enumerate}
\item[{\rm 1)}] \,\ $\Phi^{[1,0]}_1(\tau, \, z+a\tau+b, \, z-a\tau-b, \, 0)$
\begin{equation} 
= \, 
\dfrac{- \, i}{2} \cdot \dfrac{\eta(\tau)^3 \, \vartheta_{11}(\tau, 2z)}{
\vartheta_{11}(\tau, \, z+a\tau+b) \, 
\vartheta_{11}(\tau, \, z-a\tau-b)}
\, + \, 
\frac12 \, q^{a^2} \, 
\sum_{k=0}^{4a} \, (-1)^{2bk} \, 
q^{-\frac14(k-2a)^2} \, \theta_{k,1}(\tau,2z)
\label{App:eqn:2023-502a1}
\end{equation}
\item[{\rm 2)}] \,\ $\sum\limits_{j \in \zzz} \, 
\dfrac{e^{4\pi ijz}q^{j^2}}{1-(-1)^{2b} \, e^{2\pi iz} \, q^{j+a}}$
\begin{equation} 
= \, 
\frac{- \, i}{2} \cdot \dfrac{\eta(\tau)^3 \, \vartheta_{11}(\tau, 2z)}{
\vartheta_{11}(\tau, \, z+a\tau+b) \, 
\vartheta_{11}(\tau, \, z-a\tau-b)}
\, + \, 
\frac12 \, q^{a^2} \, 
\sum_{k=0}^{4a} \, (-1)^{2bk} \, 
q^{-\frac14(k-2a)^2} \, \theta_{k,1}(\tau,2z)
\label{App:eqn:2023-502a2}
\end{equation}
\end{enumerate}
\end{lemma}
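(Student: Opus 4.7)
The first move is to notice that Part 2 is essentially a restatement of Part 1: substituting $z_1=z+a\tau+b$, $z_2=z-a\tau-b$ into the definition \eqref{App:eqn:2023-503a1} of $\Phi^{[1,0]}_1$ gives $z_1+z_2=2z$ and $e^{2\pi iz_1}=(-1)^{2b}e^{2\pi iz}q^{a}$, so
\[
\Phi^{[1,0]}_1(\tau,z+a\tau+b,z-a\tau-b,0)
\, = \,
\sum_{j\in\zzz}\frac{e^{4\pi ijz}q^{j^{2}}}{1-(-1)^{2b}e^{2\pi iz}q^{j+a}}.
\]
Thus it suffices to prove Part 1, and Part 2 follows by unwinding this definition.

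For Part 1 I would follow the strategy outlined in the introduction, writing
\[
\Phi^{[1,0]}_1 \; = \; \tfrac12\bigl(\Phi^{[1,0]\,\ast}+\Phi^{[1,0]}\bigr),
\]
and evaluating the antisymmetric and symmetric pieces separately at the specialization $(z_{1},z_{2})=(z+a\tau+b,\,z-a\tau-b)$. The antisymmetric piece $\Phi^{[1,0]}=\Phi^{[1,0]}_1-\Phi^{[1,0]}_2$ is supplied by the Kac--Wakimoto denominator identity of $\widehat{sl}(2|1)$: up to the normalizing factor $-i$, it equals the theta-quotient $\eta(\tau)^{3}\vartheta_{11}(\tau,z_{1}+z_{2})/[\vartheta_{11}(\tau,z_{1})\vartheta_{11}(\tau,z_{2})]$, which under our specialization becomes precisely the first summand on the right-hand side of \eqref{App:eqn:2023-502a1} (after the factor $\tfrac12$).

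The symmetric piece $\Phi^{[1,0]\,\ast}=\Phi^{[1,0]}_1+\Phi^{[1,0]}_2$ is the step that requires real work. The substitution $j\mapsto -j$ in the defining series \eqref{App:eqn:2023-503a2} shows that under our specialization $\Phi^{[1,0]}_2$ equals the $z\mapsto -z$ image of $\Phi^{[1,0]}_1$, so $\Phi^{[1,0]\,\ast}=f(z)+f(-z)$ where $f(z)$ denotes the sum on the left of Part 2. To identify this with the theta sum $q^{a^{2}}\sum_{k=0}^{4a}(-1)^{2bk}q^{-(k-2a)^{2}/4}\theta_{k,1}(\tau,2z)$ I would group the two terms over a common denominator, use the symmetry $xy=q^{2a}$ with $x=(-1)^{2b}e^{2\pi iz}q^{j+a}$ and $y=(-1)^{2b}e^{-2\pi iz}q^{-j+a}$, expand geometrically in the regions $|x|<1$ and $|y|<1$, and regroup by completing squares in $mj$ and the expansion index. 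Because $2a\in\zzz_{\geq 0}$ the resulting double sum collapses into a finite linear combination, indexed by $k\in\{0,1,\dots,4a\}$, of the level-$1$ theta functions $\theta_{k,1}(\tau,2z)$, with the stated coefficients; the signs $(-1)^{2bk}$ arise directly from the presence of $(-1)^{2b}$ in the denominator. Alternatively this evaluation can be imported from the author's earlier notes cited in the introduction.

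Adding the two pieces and dividing by $2$ gives Part 1, and Part 2 is obtained by replacing the left-hand side of Part 1 with the series expression derived above. I expect the main obstacle to be the symmetric-piece computation: the antisymmetric part is essentially a classical input, while controlling the cancellations in $f(z)+f(-z)$ and matching them term-by-term against the theta sum is what forces the hypotheses $2a\in\zzz_{\geq 0}$ and $2b\in\zzz$ and determines the shape of the right-hand side.
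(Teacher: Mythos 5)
Your overall architecture is exactly the paper's: Part 2 is just the definition unwound at $(z_1,z_2)=(z+a\tau+b,\,z-a\tau-b)$, the antisymmetric combination $\Phi^{[1,0]}_1-\Phi^{[1,0]}_2$ is imported from the $\widehat{sl}(2|1)$ denominator identity, and the whole lemma reduces to evaluating the symmetric combination $\Phi^{[1,0]}_1+\Phi^{[1,0]}_2$ at this specialization. All of that matches the paper and is correct.

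The gap is in the one step that carries the content. You propose to pair the $j$-th term of $f(z)$ with the $j$-th term of $f(-z)$ --- this is what the relation $x_jy_j=q^{2a}$ with $x_j=(-1)^{2b}e^{2\pi iz}q^{j+a}$ and $y_j=(-1)^{2b}e^{-2\pi iz}q^{-j+a}$ encodes --- and put them over a common denominator. But $\tfrac{1}{1-x}+\tfrac{1}{1-q^{2a}/x}$ is not a polynomial in $x$, so no finite collapse occurs with that pairing; and the fallback of expanding geometrically in $|x|<1$ and $|y|<1$ cannot be done uniformly in $j$ (for $j\gg0$ one has $|x_j|<1$, for $j\ll0$ one has $|x_j|>1$), so that route forces a case split and a delicate cancellation argument you have not supplied. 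What actually works, and what the paper does, is a term-by-term identity obtained by reindexing $\Phi^{[1,0]}_2$ by $j\mapsto -(j+2a)$ --- an integer shift precisely because $2a\in\zzz_{\geq0}$ --- after which its $j$-th term becomes $-\,e^{4\pi ijz}q^{j^2}\,x_j^{4a+1}/(1-x_j)$ with the \emph{same} $x_j$ as in the $j$-th term of $\Phi^{[1,0]}_1$ (here one also uses $(-1)^{8ab}=1$, which is where $b\in\frac12\zzz$ enters). The sum of the two $j$-th terms is then $e^{4\pi ijz}q^{j^2}\sum_{k=0}^{4a}x_j^k$ by the finite geometric series, and completing the square in $j$ for each fixed $k$ produces $q^{a^2}\sum_{k=0}^{4a}(-1)^{2bk}q^{-\frac14(k-2a)^2}\theta_{k,1}(\tau,2z)$. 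You should replace the ``common denominator / geometric expansion'' paragraph with this reindexing, or else carry out the convergence-region bookkeeping explicitly; as written, the crux of the lemma is asserted rather than proved.
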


\begin{proof} 1) \,\ By \eqref{App:eqn:2023-503a1} and 
\eqref{App:eqn:2023-503a2}, one has 
%(line=225) %%
{\allowdisplaybreaks
\begin{eqnarray}
& & \hspace{-20mm}
\Phi^{[1,0]}_1(\tau, \, z+a\tau+b, \, z-a\tau-b, \, 0)
\,\ = \,\ 
\sum_{j \in \zzz} \, \dfrac{e^{4\pi ijz}q^{j^2}}{
1-(-1)^{2b} \, e^{2\pi iz} \, q^{j+a}}
\label{App:eqn:2023-502b}
\\[2mm]
& & \hspace{-20mm}
\Phi^{[1,0]}_2(\tau, \, z+a\tau+b, \, z-a\tau-b, \, 0)
\,\ = \,\ 
\sum_{j \in \zzz}
\dfrac{e^{-4\pi ijz}q^{j^2}}{1-e^{-2\pi i(z-a\tau-b)}q^j}
\nonumber
\\[1mm]
&=&
- \, \sum_{j \in \zzz} \, \frac{e^{4\pi ijz}q^{j^2}}{
1-(-1)^{2b} \, e^{2\pi iz} \, q^{j+a}} 
\, \cdot \, \big((-1)^{2b} \, e^{2\pi iz} \, q^{j+a}\big)^{4a+1}
\nonumber
\end{eqnarray}}
so 
\begin{subequations}
{\allowdisplaybreaks
\begin{eqnarray}
& & \hspace{-10mm}
\big[\Phi^{[1,0]}_1+\Phi^{[1,0]}_2\big]
(\tau, \, z+a\tau+b, \, z-a\tau-b, \, 0)
\, = \, 
\sum_{j \in \zzz} \, e^{4\pi ijz}q^{j^2} \cdot 
\frac{1- \big((-1)^{2b} e^{2\pi iz} q^{j+a}\big)^{4a+1}
}{1-(-1)^{2b} e^{2\pi iz} q^{j+a}} 
\nonumber
\\[1mm]
&=&
\sum_{j \in \zzz} \, e^{4\pi ijz}q^{j^2}
\sum_{k=0}^{4a}\big((-1)^{2b} \, e^{2\pi iz} \, q^{j+a}\big)^k
\, = \, 
\sum_{k=0}^{4a} \, (-1)^{2bk} \, q^{-\frac{k^2}{4}+ak} \, 
\sum_{j \in \zzz} 
e^{4\pi i (j+\frac{k}{2})z} \, q^{(j+\frac{k}{2})^2}
\nonumber
\\[1mm]
&=& 
q^{a^2} \, \sum_{k=0}^{4a} \, (-1)^{2bk} \, q^{-\frac14(k-2a)^2} \, 
\theta_{k,1}(\tau, 2z)
\label{App:eqn:2023-502c1}
\end{eqnarray}}
Also, by the $\widehat{sl}(2|1)$-denominator identity in Lemma 2.7 
in \cite{W2022a}, one has 
\begin{equation}
\big[\Phi^{[1,0]}_1-\Phi^{[1,0]}_2\big]
(\tau, \, z+a\tau+b, \, z-a\tau-b, \, 0)
\, = \, 
- i \cdot \dfrac{\eta(\tau)^3 \, \vartheta_{11}(\tau, 2z)}{
\vartheta_{11}(\tau, \, z+a\tau+b) \, 
\vartheta_{11}(\tau, \, z-a\tau-b)}
\label{App:eqn:2023-502c2}
\end{equation}
\end{subequations}
Then the claim 1) follows from \eqref{App:eqn:2023-502c1} and 
\eqref{App:eqn:2023-502c2}, and 2) follows from 1) and 
\eqref{App:eqn:2023-502b}.
\end{proof}

\vspace{1mm}

%(line=300)
%(label=App:lemma:2023-501b)

%(label=App:eqn:2023-502d1)
%(label=App:eqn:2023-502d2)

%(label=App:eqn:2023-502d3)
%(label=App:eqn:2023-502d4)
\begin{lemma} \,\ 
\label{App:lemma:2023-501b}
The following formulas hold for $n \in \zzz_{\geq 0}$.
\begin{subequations}
\begin{enumerate}
\item[{\rm (i)}] \,\ $\Phi^{[1,0]}_1(\tau, \, z+n\tau, \, z-n\tau, \, 0)$
\begin{equation}
= \,\ 
\frac12 \, q^{n^2} \, \Bigg\{- \, i \, 
\dfrac{\eta(\tau)^3 \, \vartheta_{11}(\tau, 2z)}{\vartheta_{11}(\tau,z)^2}
\, + \, 
\sum_{k=0}^{4n} \, q^{-\frac14(k-2n)^2} \, \theta_{k,1}(\tau,2z)\Bigg\}
\label{App:eqn:2023-502d1}
\end{equation}
\item[{\rm (ii)}] $\Phi^{[1,0]}_1(\tau, \, z+n\tau+\frac12, \, z-n\tau-\frac12, \, 0)$
\begin{equation}
= \, \frac12 \, q^{n^2} \, \Bigg\{i \, 
\dfrac{\eta(\tau)^3 \, \vartheta_{11}(\tau, 2z)}{
\vartheta_{10}(\tau,z)^2}
\, + \, 
\sum_{k=0}^{4n} (-1)^k \, q^{-\frac14(k-2n)^2} \, 
\theta_{k,1}(\tau,2z)\Bigg\}
\label{App:eqn:2023-502d2}
\end{equation}
\item[{\rm (iii)}] $\Phi^{[1,0]}_1(\tau, \, z+(n+\frac12)\tau, \, z-(n+\frac12)\tau, \, 0)$
\begin{equation}
= \,\ 
\frac12 \, q^{(n+\frac12)^2} \, \Bigg\{- \, i \, 
\dfrac{\eta(\tau)^3 \, \vartheta_{11}(\tau, 2z)}{
\vartheta_{01}(\tau,z)^2}
\, + \, 
\sum_{k=0}^{4n+2} \, q^{-\frac14(k-2n-1)^2} \, \theta_{k,1}(\tau,2z)\Bigg\}
\label{App:eqn:2023-502d3}
\end{equation}
\item[{\rm (iv)}] $\Phi^{[1,0]}_1(\tau, \, z+(n+\frac12)\tau+\frac12, \, 
z-(n+\frac12)\tau-\frac12, \, 0)$
\begin{equation}
= \,\ 
\frac12 \, q^{(n+\frac12)^2} \, \Bigg\{i \, 
\dfrac{\eta(\tau)^3 \, \vartheta_{11}(\tau, 2z)}{
\vartheta_{00}(\tau,z)^2}
\, + \, 
\sum_{k=0}^{4n+2} \, (-1)^k \, q^{-\frac14(k-2n-1)^2} \, 
\theta_{k,1}(\tau,2z)\Bigg\}
\label{App:eqn:2023-502d4}
\end{equation}
\end{enumerate}
\end{subequations}
\end{lemma}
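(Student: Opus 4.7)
The plan is to deduce all four formulas directly from Lemma~\ref{App:lemma:2023-501a}\,1) by specializing $(a,b)$ to $(n,0)$, $(n,\tfrac12)$, $(n+\tfrac12,0)$, $(n+\tfrac12,\tfrac12)$ respectively, and then simplifying the denominator $\vartheta_{11}(\tau,z+a\tau+b)\,\vartheta_{11}(\tau,z-a\tau-b)$ with the quasi-periodicity of $\vartheta_{11}$. In each case the sign $(-1)^{2bk}$ automatically gives either $1$ or $(-1)^k$, matching the four statements, and the upper limit $4a$ becomes $4n$ or $4n+2$; so the only real work is the explicit evaluation of the $\vartheta_{11}$-product.

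Recall the shift formulas
\[
\vartheta_{11}(\tau,z+\tau)=-q^{-1/2}e^{-2\pi iz}\vartheta_{11}(\tau,z),\qquad
\vartheta_{11}(\tau,z+1)=-\vartheta_{11}(\tau,z),
\]
together with the half-period transitions
\[
\vartheta_{11}(\tau,z+\tfrac12)=-\vartheta_{10}(\tau,z),\qquad
\vartheta_{11}(\tau,z+\tfrac{\tau}{2})=-iq^{-1/8}e^{-\pi iz}\vartheta_{01}(\tau,z),
\]
and the oddness $\vartheta_{11}(\tau,-z)=-\vartheta_{11}(\tau,z)$. Iterating the first two formulas, for integer $n$ one obtains
$\vartheta_{11}(\tau,z+n\tau)\vartheta_{11}(\tau,z-n\tau)=q^{-n^2}\vartheta_{11}(\tau,z)^2$, which, inserted into Lemma~\ref{App:lemma:2023-501a}\,1), yields (i) after pulling out the resulting $q^{n^2}$.

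For (ii), combining the $n\tau$-shift with $\vartheta_{11}(\tau,z+\tfrac12)=-\vartheta_{10}(\tau,z)$ gives $\vartheta_{11}(\tau,z+n\tau+\tfrac12)\vartheta_{11}(\tau,z-n\tau-\tfrac12)=-q^{-n^2}\vartheta_{10}(\tau,z)^2$; the extra minus sign turns $-i/2$ into $+i/2$, producing (ii). For (iii), using $\vartheta_{11}(\tau,z+\tfrac{\tau}{2})=-iq^{-1/8}e^{-\pi iz}\vartheta_{01}(\tau,z)$ and the corresponding expression for $\vartheta_{11}(\tau,z-\tfrac{\tau}{2})$ (obtained via oddness), followed by the $n\tau$-shift, produces $\vartheta_{11}(\tau,z+(n+\tfrac12)\tau)\vartheta_{11}(\tau,z-(n+\tfrac12)\tau)=q^{-(n+1/2)^2}\vartheta_{01}(\tau,z)^2$; this enters cleanly with the same sign as case (i). Finally, (iv) is the most involved: adding $\tfrac12$ to the $\tfrac{\tau}{2}$-shift converts $\vartheta_{01}$ into $\vartheta_{00}$ (through $\vartheta_{00}(\tau,z)=q^{1/8}e^{\pi iz}\vartheta_{10}(\tau,z+\tfrac{\tau}{2})$ or an equivalent direct computation), yielding $\vartheta_{11}(\tau,z+(n+\tfrac12)\tau+\tfrac12)\vartheta_{11}(\tau,z-(n+\tfrac12)\tau-\tfrac12)=-q^{-(n+1/2)^2}\vartheta_{00}(\tau,z)^2$, and the minus sign again flips $-i/2$ to $+i/2$, matching (iv).

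The only real obstacle is bookkeeping of signs and phase factors in the half-period transitions; the $q$-powers themselves combine cleanly via the identity $-\tfrac{n^2}{2}-\tfrac{n}{2}-\tfrac18=-\tfrac12(n+\tfrac12)^2$, and the $e^{\pm\pi i(2n+1)z}$ phases produced by the two factors in the product are exact complex conjugates and so cancel. Once the four product identities above are established, each assertion (i)--(iv) is an immediate substitution into \eqref{App:eqn:2023-502a1}.
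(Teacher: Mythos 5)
Your proposal is correct and follows exactly the route the paper intends: the paper's proof of this lemma is the one-line remark that the formulas "are obtained easily from \eqref{App:eqn:2023-502a1}", i.e.\ by specializing $(a,b)$ to $(n,0)$, $(n,\tfrac12)$, $(n+\tfrac12,0)$, $(n+\tfrac12,\tfrac12)$, which is precisely what you do. Your explicit evaluations of the products $\vartheta_{11}(\tau,z+a\tau+b)\,\vartheta_{11}(\tau,z-a\tau-b)$, including the signs that flip $-\tfrac{i}{2}$ to $+\tfrac{i}{2}$ in cases (ii) and (iv), all check out.
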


\vspace{-1mm}

\begin{proof} \,\ These formulas are obtained easily from 
\eqref{App:eqn:2023-502a1}.
\end{proof}

\vspace{1mm}

%(line=369)
%(label=App:prop:2023-501a)

%\noindent
%(label=eqn:2023-425a1)
%(label=eqn:2023-425a2)
%(label=eqn:2023-425a3)
%(label=eqn:2023-425a4)

%\noindent
%(label=eqn:2023-425b1)
%(label=eqn:2023-425b2)
%(label=eqn:2023-425b3)
%(label=eqn:2023-425b4)
\begin{prop} \,\ 
\label{cor:2023-414a}
\label{App:prop:2023-501a}
The following formulas hold for $n \in \zzz_{\geq 0}$.
\begin{enumerate}
\item[{\rm 1)}] 
\begin{subequations}
\begin{enumerate}
\item[{\rm (i)}] \quad $2 \, \sum\limits_{j \in \zzz}
\dfrac{e^{2\pi ijz} q^{\frac12j^2}}{1-e^{2\pi iz}q^{j+n}}$
\begin{equation}
= \,\ 
q^{\frac12n^2} \, \bigg\{- \, i \, 
\vartheta_{00}(\tau,0)
%\frac{\eta(\tau)^5}{\eta(\frac{\tau}{2})^2 \, \eta(2\tau)^2} 
\cdot 
\frac{\vartheta_{01}(\tau,z) \, \vartheta_{10}(\tau,z)
}{\vartheta_{11}(\tau,z)}
\, + \, 
\sum_{k=0}^{2n} \, q^{-\frac12(k-n)^2} \, \vartheta_{00}(\tau,z) 
\bigg\}
\label{eqn:2023-425a1}
\end{equation}
\item[{\rm (ii)}] \,\ $2 \, \sum\limits_{j \in \zzz}
\dfrac{e^{2\pi i(j-\frac12)z} q^{\frac12(j-\frac12)^2}
}{1-e^{2\pi iz}q^{j+n}}$
\begin{equation}
= \,\ 
q^{\frac12(n+\frac12)^2} \, \bigg\{- \, i \,  
\vartheta_{10}(\tau,0)
%\frac{\eta(2\tau)^2}{\eta(\tau)} 
\cdot 
\frac{\vartheta_{01}(\tau,z) \, \vartheta_{00}(\tau,z)
}{\vartheta_{11}(\tau,z)}
\, + \, 
\sum_{k=0}^{2n+1} \, q^{-\frac12(k-n-\frac12)^2} \, \vartheta_{10}(\tau,z)
\bigg\}
\label{eqn:2023-425a2}
\end{equation}
\item[{\rm (iii)}] \,\ $2 \, \sum\limits_{j \in \zzz}
\dfrac{e^{2\pi ijz} q^{\frac12j^2}}{1-e^{2\pi iz}q^{j+n+\frac12}}$
\begin{equation}
= \,\ 
q^{\frac12(n+\frac12)^2} \, \bigg\{
- \, i \, 
\vartheta_{10}(\tau,0)
%\frac{\eta(2\tau)^2}{\eta(\tau)} 
\cdot 
\frac{\vartheta_{10}(\tau,z) \, \vartheta_{11}(\tau,z)
}{\vartheta_{01}(\tau,z)}
\, + \, 
\sum_{k=0}^{2n+1} \, q^{-\frac12(k-n-\frac12)^2} \, \vartheta_{00}(\tau,z)
\bigg\}
\label{eqn:2023-425a3}
\end{equation}
\item[{\rm (iv)}] \quad $2 \, \sum\limits_{j \in \zzz}
\dfrac{e^{2\pi i(j+\frac12)z} q^{\frac12(j+\frac12)^2}}{
1-e^{2\pi iz}q^{j+n+\frac12}}$
\begin{equation}
= \,\ 
q^{\frac12n^2} \, \bigg\{
- \, i \, 
\vartheta_{00}(\tau,0)
%\frac{\eta(\tau)^5}{\eta(\frac{\tau}{2})^2 \, \eta(2\tau)^2} 
\cdot 
\frac{\vartheta_{00}(\tau,z) \, \vartheta_{11}(\tau,z)
}{\vartheta_{01}(\tau,z)}
\, + \, 
\sum_{k=0}^{2n} \, q^{-\frac12(k-n)^2} \, \vartheta_{10}(\tau,z)
\bigg\}
\label{eqn:2023-425a4}
\end{equation}
\end{enumerate}
\end{subequations}
\item[{\rm 2)}]
\begin{subequations}
\begin{enumerate}
\item[{\rm (i)}] \quad $2 \, \sum\limits_{j \in \zzz} (-1)^j
\dfrac{e^{2\pi ijz} q^{\frac12j^2}}{1+e^{2\pi iz}q^{j+n}}$
\begin{equation}
= \,\ 
q^{\frac12n^2} \, \bigg\{i \, 
\vartheta_{00}(\tau,0)
%\frac{\eta(\tau)^5}{\eta(\frac{\tau}{2})^2 \, \eta(2\tau)^2} 
\cdot 
\frac{\vartheta_{00}(\tau,z) \, \vartheta_{11}(\tau,z)
}{\vartheta_{10}(\tau,z)}
+  
\sum_{k=0}^{2n} \, q^{-\frac12(k-n)^2} \, \vartheta_{01}(\tau,z) 
\bigg\}
\label{eqn:2023-425b1}
\end{equation}
\item[{\rm (ii)}] \quad $2 \, \sum\limits_{j \in \zzz} \, 
(-1)^j \, \dfrac{e^{2\pi i(j-\frac12)z} \, q^{\frac12(j-\frac12)^2} 
}{1+e^{2\pi iz}q^{j+n}}$
\begin{equation}
= \,\ 
q^{\frac12(n+\frac12)^2} \, \bigg\{
- \, 
\vartheta_{10}(\tau,0)
%\frac{\eta(2\tau)^2}{\eta(\tau)} 
\cdot 
\frac{\vartheta_{01}(\tau,z) \, \vartheta_{00}(\tau,z)
}{\vartheta_{10}(\tau,z)}
+ \, i 
\sum_{k=0}^{2n+1} \, q^{-\frac12(k-n-\frac12)^2} \, 
\vartheta_{11}(\tau,z)
\bigg\}
\label{eqn:2023-425b2}
\end{equation}
\item[{\rm (iii)}] \quad $2 \, \sum\limits_{j \in \zzz} \, (-1)^j \, 
\dfrac{e^{2\pi ijz} q^{\frac12j^2}}{1+e^{2\pi iz}q^{j+n+\frac12}}$
\begin{equation}
= \,\ 
q^{\frac12(n+\frac12)^2} \, \bigg\{
i \, 
\vartheta_{10}(\tau,0)
%\frac{\eta(2\tau)^2}{\eta(\tau)} 
\cdot 
\frac{\vartheta_{11}(\tau,z) \, \vartheta_{10}(\tau,z)
}{\vartheta_{00}(\tau,z)}
\,\ + \,\ 
\sum_{k=0}^{2n+1} \, q^{-\frac12(k-n-\frac12)^2} \, \vartheta_{01}(\tau,z)
\bigg\}
\label{eqn:2023-425b3}
\end{equation}
\item[{\rm (iv)}] \quad $2 \, \sum\limits_{j \in \zzz} (-1)^j \, 
\dfrac{e^{2\pi i(j+\frac12)z}q^{\frac12(j+\frac12)^2}
}{1+e^{2\pi iz}q^{j+n+\frac12}}$
\begin{equation}
= \,\ 
q^{\frac12n^2} \, \bigg\{
\vartheta_{00}(\tau,0)
%\frac{\eta(\tau)^5}{\eta(\frac{\tau}{2})^2 \, \eta(2\tau)^2} 
\cdot 
\frac{\vartheta_{01}(\tau,z) \, \vartheta_{10}(\tau,z)
}{\vartheta_{00}(\tau,z)}
\, - \,\ i \,  
\sum_{k=0}^{2n} \, q^{-\frac12(k-n)^2} \, \vartheta_{11}(\tau,z)
\bigg\}
\label{eqn:2023-425b4}
\end{equation}
\end{enumerate}
\end{subequations}
\end{enumerate}
\end{prop}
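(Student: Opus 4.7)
The plan is to adapt the method used to prove Lemma~\ref{App:lemma:2023-501a} to the level-$\tfrac12$ Appell functions. I would first recognise that each of the eight LHS sums equals, up to an overall $q$-factor and exponential in $z$, a specialisation $2\Phi^{(\pm)[1/2,s]}_1(\tau,z+a\tau+b,z-a\tau-b,0)$ for an appropriate sign, $s\in\{0,\pm\tfrac12\}$, and $(a,b)$ in the four-element set $\{(n,0),(n,\tfrac12),(n+\tfrac12,0),(n+\tfrac12,\tfrac12)\}$ used in Lemma~\ref{App:lemma:2023-501b}, with part 1 corresponding to the $+$ sign and part 2 to the $-$ sign. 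Each identity then reduces, via $2\Phi_1=(\Phi_1+\Phi_2)+(\Phi_1-\Phi_2)$, to two separate computations.

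For the non-singular summand (namely $\Phi_1+\Phi_2$ in part 1, $\Phi_1-\Phi_2$ in part 2) I would follow the template of the proof of Lemma~\ref{App:lemma:2023-501a}: rewrite $\Phi_2$ at the specialised arguments via the index substitution $j\mapsto -j-2a-c$ (with $c$ adjusting for the half-integer shift) and a monomial renormalisation, so that its denominator matches that of $\Phi_1$; the combination in question then equals $\sum_j(\cdots)_j(1-X_j^{N})/(1-X_j)$ with $X_j=\pm e^{2\pi iz}q^{j+a}$. Expanding the telescope, interchanging the order of summation, completing the square in $j$, and shifting reduces the inner $j$-sum to one of $\vartheta_{00}(\tau,z)$, $\vartheta_{10}(\tau,z)$, $\vartheta_{01}(\tau,z)$, or $\vartheta_{11}(\tau,z)$, yielding the finite-sum (regular) part $q^{a^2/2}\sum_k q^{-\frac12(k-\cdots)^2}\vartheta_{\alpha\beta}(\tau,z)$ of the RHS.

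For the singular summand I would use the elementary parity-splitting identity
\[
\Phi^{(\pm)[1/2,0]}(\tau,z_1,z_2,0)=\Phi^{[1,0]}(2\tau,z_1,z_2,0)\pm e^{\pi i(z_1+z_2)}q^{1/2}\,\Phi^{[1,0]}(2\tau,z_1+\tau,z_2+\tau,0),
\]
together with its analogues for $s=\pm\tfrac12$ and for the starred object $\Phi^{(-)[1/2,s]\,\ast}$ (obtained by splitting the defining sum according to the parity of the summation index). Applying the $\widehat{sl}(2|1)$-denominator identity (Lemma 2.7 of \cite{W2022a}) to each $\Phi^{[1,0]}(2\tau,\cdot)$, specialising at $(z_1,z_2)=(z+a\tau+b,z-a\tau-b)$, and simplifying via the half-period formula $\vartheta_{11}(2\tau,z+\tau)=iq^{-1/4}e^{-\pi iz}\vartheta_{00}(2\tau,z)$ (with its cyclic analogues for $\vartheta_{01},\vartheta_{10},\vartheta_{00}$) and the Jacobi duplication identity $\eta(\tau)^3\vartheta_{11}(\tau,2z)=\vartheta_{00}(\tau,z)\vartheta_{01}(\tau,z)\vartheta_{10}(\tau,z)\vartheta_{11}(\tau,z)$ collapses the result to the single quotient $\vartheta_{\alpha\beta}(\tau,0)\vartheta_{\gamma\delta}(\tau,z)\vartheta_{\epsilon\eta}(\tau,z)/\vartheta_{\mu\nu}(\tau,z)$ that appears as the singular part of the RHS.

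The main technical obstacle I anticipate is the bookkeeping in this last step. Eight separate matchings must be verified, with the denominator $\vartheta_{\mu\nu}(\tau,z)$ cycling through $\vartheta_{11},\vartheta_{01},\vartheta_{10},\vartheta_{00}$ as $(a,b)$ ranges over the four half-lattice shifts and the prefactor $\vartheta_{\alpha\beta}(\tau,0)$ alternating between $\vartheta_{00}(\tau,0)$ and $\vartheta_{10}(\tau,0)$ according to whether $a$ is an integer or a half-integer. Confirming that these patterns, together with all the $q$-powers and quasi-periodicity phases arising from the parity-splitting and the half-period transformations, come out uniformly across the eight cases is the central challenge; once one representative case is verified, however, the remaining ones should follow in parallel.
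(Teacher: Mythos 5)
Your guiding idea --- split $2\Phi_1=(\Phi_1+\Phi_2)+(\Phi_1-\Phi_2)$, get one combination by telescoping and the other from a denominator identity --- is indeed the philosophy the paper announces in the introduction, but the paper's actual proof of this proposition takes a lighter route than yours. It never redoes the telescoping, nor derives a denominator identity, at level $\tfrac12$: it takes the four already-established level-one formulas of Lemma \ref{App:lemma:2023-501b} (which already package both ingredients), combines them pairwise via the halving identities
$\Phi^{[1,0]}_1(\tau,z+a\tau,z-a\tau,0)\pm\Phi^{[1,0]}_1(\tau,z+a\tau+\tfrac12,z-a\tau-\tfrac12,0)
=2\,\Phi^{[\frac12,0]\text{ or }[\frac12,\frac12]}_1(2\tau,2(z+a\tau),2(z-a\tau),0)$
from Lemma 2.5 of \cite{W2022a}, converts the resulting $\vartheta_{ab}(\tau,z)$-expressions into $\vartheta_{ab}(2\tau,2z)$ by the duplication formulas \eqref{App:eqn:2023-508a}--\eqref{App:eqn:2023-508b}, and relabels $(2\tau,2z)\mapsto(\tau,z)$; part 2) is then obtained from part 1) simply by $z\mapsto z+\tfrac12$. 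So the direction of travel is opposite to yours: level one at $\tau$ is pushed up to level $\tfrac12$ at $2\tau$, rather than level $\tfrac12$ being split down into level one at $2\tau$.

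Your direct level-$\tfrac12$ route could be made to work, but as sketched it has two concrete gaps beyond bookkeeping. First, your identification of the non-singular summand in part 2) is wrong: writing the part-2) sums in the form $\Phi^{(-)[\frac12,s]}_i(\tau,z+a\tau+b,z-a\tau-b,0)$ forces $b=\tfrac12$, and the substitution $j\mapsto -j\mapsto j+2a$ then produces the factor $(-1)^{2a}(-1)^{4ab}=+1$ in front of $\Phi_2$ in every case, so the telescoping combination is $\Phi_1+\Phi_2$ in part 2) just as in part 1), not $\Phi_1-\Phi_2$ (the issue evaporates if you notice that part 2) is part 1) with $z\mapsto z+\tfrac12$). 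Second, and more seriously, for the $s=\pm\tfrac12$ cases (items (ii) and (iv)) the parity splitting does not hand you $\Phi^{[1,0]}(2\tau,\cdot,\cdot,0)$ at a common pair of arguments: the even and odd parts of $\Phi_1$ and of $\Phi_2$ emerge at \emph{asymmetrically} shifted arguments such as $(z_1,z_2+\tau)$ versus $(z_1-\tau,z_2)$, and realigning them requires half-period shifts of the individual, non-elliptic functions $\Phi^{[1,0]}_i$ --- precisely where theta-function correction terms can appear. Your phrase ``together with its analogues for $s=\pm\tfrac12$'' conceals this genuinely nontrivial step, which the paper's argument avoids entirely. (A small further slip: $\vartheta_{11}(2\tau,z+\tau)$ is proportional to $\vartheta_{01}(2\tau,z)$, not to $\vartheta_{00}(2\tau,z)$.) I would recommend adopting the paper's transfer-from-level-one argument, which sidesteps all of these difficulties.
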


\begin{proof} 1) \,\ We recall Lemma 2.5 in \cite{W2022a} which gives the 
following formulas for $a \in \ccc$ :
%(line=539) %%
{\allowdisplaybreaks
\begin{eqnarray*}
& & \hspace{-25mm}
\Phi^{[1,0]}_1(\tau, \, z+a\tau, \, z-a\tau, \, 0)
\, + \, 
\Phi^{[1,0]}_1(\tau, \, z+a\tau+\tfrac12, \, z-a\tau-\tfrac12, \, 0)
\\[0mm]
&=&
2 \, \Phi^{[\frac12,0]}_1(2\tau, \, 2(z+a\tau), \, 2(z-a\tau), \, 0)
\\[1mm]
& & \hspace{-25mm}
\Phi^{[1,0]}_1(\tau, \, z+a\tau, \, z-a\tau, \, 0)
\, - \, 
\Phi^{[1,0]}_1(\tau, z+a\tau+\tfrac12, z-a\tau-\tfrac12,0)
\\[0mm]
&=&
2 \, \Phi^{[\frac12,\frac12]}_1(2\tau, \, 2(z+a\tau), \, 2(z-a\tau), \, 0)
\end{eqnarray*}}
Then, by using the formulas (10.5) and (10.9) in \cite{W2023a} 
and Note 1.1 in \cite{W2022c} and also using the formulas 
\begin{equation} \left\{
\begin{array}{ccl}
\vartheta_{00}(\tau,z)^2 \, + \, \vartheta_{01}(\tau,z)^2
&=& 
2 \, \eta(2\tau) \, \bigg[\dfrac{
\eta(2\tau)^2}{\eta(\tau) \, \eta(4\tau)}\bigg]^2 \, 
\vartheta_{00}(2\tau,2z)
\\[4mm]
\vartheta_{00}(\tau,z)^2 \, - \, \vartheta_{01}(\tau,z)^2
&=&
4 \, \eta(2\tau) \, \bigg[\dfrac{\eta(4\tau)}{\eta(2\tau)}\bigg]^2 \, 
\vartheta_{10}(2\tau,2z)
\\[4mm]
\vartheta_{10}(\tau,z)^2 \, + \, \vartheta_{11}(\tau,z)^2
&=& 
4 \, \eta(2\tau) \, \bigg[\dfrac{\eta(4\tau)}{\eta(2\tau)}\bigg]^2 \, 
\vartheta_{00}(2\tau,2z)
\\[4mm]
\vartheta_{10}(\tau,z)^2 \, - \, \vartheta_{11}(\tau,z)^2
&=& 
2 \, \eta(2\tau) \, \bigg[\dfrac{
\eta(2\tau)^2}{\eta(\tau) \, \eta(4\tau)}\bigg]^2 \, 
\vartheta_{10}(2\tau,2z)
\end{array} \right.
\label{App:eqn:2023-508a}
\end{equation}
and
\begin{equation}
\left\{
\begin{array}{lcc}
\vartheta_{00}(\tau,0) &=&
\dfrac{\eta(\tau)^5}{\eta(\frac{\tau}{2})^2 \, \eta(\tau)^2} 
\\[5mm]
\vartheta_{01}(\tau,0) &=&
\dfrac{\eta(\frac{\tau}{2})^2}{\eta(\tau)}
\end{array}\right. \hspace{10mm} \left\{
\begin{array}{lcc}
\vartheta_{10}(\tau,0) &=& 2 \cdot \dfrac{\eta(2\tau)^2}{\eta(\tau)} 
\\[5mm]
\vartheta_{11}(\tau,0) &=& 0
\end{array}\right.
\label{App:eqn:2023-508b}
\end{equation}
we obtain the following :
%(line=615) %%
{\allowdisplaybreaks
\begin{eqnarray*}
& & \hspace{-8mm}
2 \, \Phi^{[\frac12,0]}_1(2\tau, \, 2(z+n\tau), \, 2(z-n\tau), \, 0)
\\[2mm]
& & \hspace{-7mm}
= \, 
\Phi^{[1,0]}_1(\tau, \, z+n\tau, \, z-n\tau, \, 0)
+
\Phi^{[1,0]}_1(\tau, \, z+n\tau+\tfrac12, \, z-n\tau-\tfrac12, \, 0)
\\[1mm]
& & \hspace{-7mm}
= \, 
\frac12 \, q^{n^2} \, \Bigg\{- \, i \, 
\dfrac{\eta(\tau)^3 \, \vartheta_{11}(\tau, 2z)}{\vartheta_{11}(\tau,z)^2}
+ \, i \, 
\dfrac{\eta(\tau)^3 \, \vartheta_{11}(\tau, 2z)}{\vartheta_{10}(\tau,z)^2}
\bigg\}
\\[1mm]
& &
+ \,\ \frac12 \, q^{n^2} \, 
\sum_{k=0}^{4n} \, \Big\{
q^{-\frac14(k-2n)^2} \, \theta_{k,1}(\tau,2z)
\, + \, 
(-1)^k \, q^{-\frac14(k-2n)^2} \, \theta_{k,1}(\tau,2z)
\Big\}
\\[1mm]
& & \hspace{-7mm}
= \, 
- \, i \, q^{n^2} \, 
\frac{\eta(2\tau)^5}{\eta(\tau)^2 \, \eta(4\tau)^2}
\cdot 
\frac{\vartheta_{01}(2\tau,2z) \, \vartheta_{10}(2\tau,2z)
}{\vartheta_{11}(2\tau,2z)}
\, + \, 
q^{n^2} \, \sum_{k=0}^{2n} \, q^{-(k-n)^2} \, 
\theta_{0,\frac12}(2\tau,4z)
\\[1mm]
& & \hspace{-7mm}
= \, 
- \, i \, q^{n^2} \, \vartheta_{00}(2\tau,0) 
\cdot 
\frac{\vartheta_{01}(2\tau,2z) \, \vartheta_{10}(2\tau,2z)
}{\vartheta_{11}(2\tau,2z)}
\, + \, 
q^{n^2} \, \sum_{k=0}^{2n} \, q^{-(k-n)^2} \, 
\vartheta_{00}(2\tau,2z)
% (ii)
\\[2mm] 
& & \hspace{-8mm}
2 \, \Phi^{[\frac12,\frac12]}_1(2\tau, \, 2(z+n\tau), \, 2(z-n\tau), \, 0)
\\[2mm]
& & \hspace{-7mm}
= \, 
\Phi^{[1,0]}_1(\tau, \, z+n\tau, \, z-n\tau, \, 0)
-
\Phi^{[1,0]}_1(\tau, \, z+n\tau+\tfrac12, \, z-n\tau-\tfrac12, \, 0)
\\[1mm]
& & \hspace{-7mm}
= \, 
\frac12 \, q^{n^2} \, \Bigg\{- \, i \, 
\dfrac{\eta(\tau)^3 \, \vartheta_{11}(\tau, 2z)}{\vartheta_{11}(\tau,z)^2}
- \, i \, 
\dfrac{\eta(\tau)^3 \, \vartheta_{11}(\tau, 2z)}{\vartheta_{10}(\tau,z)^2}
\bigg\}
\\[2mm]
& &
+ \,\ \frac12 \, q^{n^2} \, 
\sum_{k=0}^{4n} \, \Big\{
q^{-\frac14(k-2n)^2} \, \theta_{k,1}(\tau,2z)
\, - \, 
(-1)^k \, q^{-\frac14(k-2n)^2} \, \theta_{k,1}(\tau,2z)
\Big\}
\\[1mm]
& & \hspace{-7mm}
= \, - \, i \, q^{n^2} \, 
\vartheta_{10}(2\tau,0)
%\frac{\eta(4\tau)^2}{\eta(2\tau)} 
\cdot 
\frac{\vartheta_{01}(2\tau,2z) \, \vartheta_{00}(2\tau,2z)
}{\vartheta_{11}(2\tau,2z)}
\, + \, 
q^{n^2} \, \sum_{k=0}^{2n-1} \, q^{-(k-n+\frac12)^2} \, 
\vartheta_{10}(2\tau,2z)
%\theta_{\frac12,\frac12}(2\tau,4z)
% (iii)
\\[2mm]
& & \hspace{-8mm}
2 \, \Phi^{[\frac12,0]}_1(2\tau, \, 2(z+(n+\tfrac12)\tau), \, 
2(z-(n+\tfrac12)\tau), \, 0)
\\[2mm]
& & \hspace{-7mm}
= \, 
\Phi^{[1,0]}_1(\tau, \, z+(n+\tfrac12)\tau, \, z-(n+\tfrac12)\tau, \, 0)
+
\Phi^{[1,0]}_1(\tau, \, z+(n+\tfrac12)\tau+\tfrac12, \, 
z-(n+\tfrac12)\tau-\tfrac12, \, 0)
\\[1mm]
& & \hspace{-7mm}
= \, 
\frac12 \, q^{(n+\frac12)^2} \, \Bigg\{- \, i \, 
\dfrac{\eta(\tau)^3 \, \vartheta_{11}(\tau, 2z)}{\vartheta_{01}(\tau,z)^2}
+ \, i \, 
\dfrac{\eta(\tau)^3 \, \vartheta_{11}(\tau, 2z)}{\vartheta_{00}(\tau,z)^2}
\bigg\}
\\[2mm]
& &
+ \,\ \frac12 \, q^{(n+\frac12)^2} \, 
\sum_{k=0}^{4n+2} \, \Big\{
q^{-\frac14(k-2n-1)^2} \, \theta_{k,1}(\tau,2z)
\, + \, 
(-1)^k \, q^{-\frac14(k-2n-1)^2} \, \theta_{k,1}(\tau,2z)
\Big\}
\\[1mm]
& & \hspace{-7mm}
= \, - \, i \, q^{(n+\frac12)^2} \, 
\vartheta_{10}(2\tau,0)
%\frac{\eta(4\tau)^2}{\eta(2\tau)} 
\cdot 
\frac{\vartheta_{10}(2\tau,2z) \, \vartheta_{11}(2\tau,2z)
}{\vartheta_{01}(2\tau,2z)}
\, + \, 
q^{(n+\frac12)^2} \, \sum_{k=0}^{2n+1} \, q^{-(k-n-\frac12)^2} \, 
\vartheta_{00}(2\tau,2z)
%\theta_{0,\frac12}(2\tau,4z)
% (iv)
\\[2mm]
& & \hspace{-8mm}
2 \, \Phi^{[\frac12,\frac12]}_1(2\tau, \, 2(z+(n+\tfrac12)\tau), \, 
2(z-(n+\tfrac12)\tau), \, 0)
\\[2mm] 
& & \hspace{-7mm}
= \, 
\Phi^{[1,0]}_1(\tau, \, z+(n+\tfrac12)\tau, \, z-(n+\tfrac12)\tau, \, 0)
-
\Phi^{[1,0]}_1(\tau, \, z+(n+\tfrac12)\tau+\tfrac12, \, 
z-(n+\tfrac12)\tau-\tfrac12, \, 0)
\\[2mm]
& & \hspace{-7mm}
= \, \frac12 \, q^{(n+\frac12)^2} \, \Bigg\{- \, i \, 
\dfrac{\eta(\tau)^3 \, \vartheta_{11}(\tau, 2z)}{\vartheta_{01}(\tau,z)^2}
- \, i \, 
\dfrac{\eta(\tau)^3 \, \vartheta_{11}(\tau, 2z)}{\vartheta_{00}(\tau,z)^2}
\bigg\}
\\[2mm]
& &
+ \,\ \frac12 \, q^{(n+\frac12)^2} \, 
\sum_{k=0}^{4n+2} \, \Big\{
q^{-\frac14(k-2n-1)^2} \, \theta_{k,1}(\tau,2z)
\, - \, 
(-1)^k \, q^{-\frac14(k-2n-1)^2} \, \theta_{k,1}(\tau,2z)
\Big\}
\\[2mm]
& & \hspace{-7mm}
= \, - \, i \, q^{(n+\frac12)^2} \, 
\vartheta_{00}(2\tau,0)
%\frac{\eta(2\tau)^5}{\eta(\tau)^2 \, \eta(4\tau)^2} 
\cdot 
\frac{\vartheta_{00}(2\tau,2z) \, \vartheta_{11}(2\tau,2z)
}{\vartheta_{01}(2\tau,2z)}
\, + \, 
q^{(n+\frac12)^2} \, \sum_{k=0}^{2n} \, q^{-(k-n)^2} \, 
\vartheta_{10}(2\tau,2z)
%\theta_{\frac12,\frac12}(2\tau,4z)
\end{eqnarray*}}
The LHS's of the above equations are obtained from 
\eqref{App:eqn:2023-503a1} as follows:
{\allowdisplaybreaks
\begin{eqnarray*}
& & \hspace{-9mm}
% {\rm \eqref{eqn:2023-414a1} } + {\rm \eqref{eqn:2023-414a2} }
2 \, \Phi^{[\frac12,0]}_1(2\tau, \, 2(z+n\tau), \, 2(z-n\tau), \, 0)
\,\ = \,\ 
2 \, \bigg[\sum_{j \in \zzz}
\frac{e^{2\pi ijz} q^{\frac12j^2}}{1-e^{2\pi iz}q^{j+n}}\bigg]_{
\substack{\tau \, \rightarrow \, 2\tau \\[1mm] z \, \rightarrow \, 2z
}}
\\[2mm] 
& & \hspace{-9mm}
%{\rm \eqref{eqn:2023-414a1} } - {\rm \eqref{eqn:2023-414a2} }
2 \, \Phi^{[\frac12,\frac12]}_1(2\tau, \, 2(z+n\tau), \, 2(z-n\tau), \, 0)
\,\ = \,\ 
2 \, \bigg[ q^{\frac12n} e^{\pi iz}
\sum_{j \in \zzz}
\frac{e^{2\pi ijz} q^{\frac12j(j+1)}}{1-e^{2\pi iz}q^{j+n}}\bigg]_{
\substack{\tau \, \rightarrow \, 2\tau \\[1mm] z \, \rightarrow \, 2z
}}
\\[2mm]
& & \hspace{-9mm}
%{\rm \eqref{eqn:2023-414a3} } + {\rm \eqref{eqn:2023-414a4} }
2 \, \Phi^{[\frac12,0]}_1(2\tau, \, 2(z+(n+\tfrac12)\tau), \, 
2(z-(n+\tfrac12)\tau), \, 0) 
\,\ = \,\ 
2 \, \bigg[\sum_{j \in \zzz}
\frac{e^{2\pi ijz} q^{\frac12j^2}}{1-e^{2\pi iz}q^{j+n+\frac12}}\bigg]_{
\substack{\tau \, \rightarrow \, 2\tau \\[1mm] z \, \rightarrow \, 2z
}}
\\[2mm] 
& & \hspace{-9mm}
%{\rm \eqref{eqn:2023-414a3} } - {\rm \eqref{eqn:2023-414a4} }
2 \, \Phi^{[\frac12,\frac12]}_1(2\tau, \, 2(z+(n+\tfrac12)\tau), \, 
2(z-(n+\tfrac12)\tau), \, 0) 
=
2 \, \bigg[ q^{\frac12(n+\frac12)} e^{\pi iz}
\sum_{j \in \zzz}
\frac{e^{2\pi ijz} q^{\frac12j(j+1)}}{1-e^{2\pi iz}q^{j+n+\frac12}}\bigg]_{
\substack{\tau \, \rightarrow \, 2\tau \\[1mm] z \, \rightarrow \, 2z
}}
\end{eqnarray*}}
Thus the formulas in 1) are proved.
The formulas in 2) are obtained from those in 1) 
by replacing $z$ with $z+\frac12$.
\end{proof}

\section{$\Phi^{(-)[\frac12,\frac12]}_1(\tau, \, z+a\tau+b, \, z-a\tau-b, \, 0)$}
%\markright{\S \ref{sec:Phi1(-)(1/2,1/2)}. 
%$\Phi^{(-)[\frac12,\frac12]}_1(\tau, \, z+a\tau+b, \, z-a\tau-b, \, 0)$}
\label{sec:Phi1(-)(1/2,1/2)}
%(label=sec:Phi1(-)(1/2,1/2))
%(line=823)

%\medskip

%(line=827)
%(label=App:lemma:2023-503a)
%(label=App:eqn:2023-503b)
\begin{lemma} 
\label{App:lemma:2023-503a}
\begin{enumerate}
\item[{\rm 1)}] \,\ Let $a \in \frac12 \zzz_{\geq 0}$ and $b \in \frac12\zzz$ 
\,\ such that \,\ $(a+\frac12)b \, \in \, \frac12 \zzz$. Then
\begin{eqnarray*}
& & \hspace{-10mm}
\big[\Phi^{(-)[\frac12,\frac12]}_1
\, + \, (-1)^{2a} \, \Phi^{(-)[\frac12,\frac12]}_2\big]
(\tau, \, z+a\tau+b, \, z-a\tau-b, \, 0)
\\[1mm]
&=&
\left\{
\begin{array}{lcl}
0 & & {\rm if} \quad a \, \in \, \zzz \\[2mm]
- i \, e^{\pi ib} \, q^{\frac12a^2}
\sum\limits_{0 \, \leq \, k \, \leq \, 2a-1} \, (-1)^{(2b+1)k} \, 
q^{-\frac12(k-a+\frac12)^2} \, 
\vartheta_{11}(\tau,z)
%\theta_{\frac12,\frac12}^{(-)}(\tau, 2z)
& & {\rm if} \quad a \, \in \, \frac12\zzz_{\rm odd}
\end{array} \right.
\end{eqnarray*}
\item[{\rm 2)}] \,\ Let $a \in \frac12 \nnn_{\rm odd}$ and 
$b \in \frac12\zzz$. Then
{\allowdisplaybreaks
\begin{eqnarray}
& & \hspace{-10mm}
\Phi^{(-)[\frac12,\frac12]}_1
(\tau, \, z+a\tau+b, \, z-a\tau-b, \, 0)
\nonumber
\\[2mm]
&=& \hspace{5mm}
\frac{i}{2} \,\ \dfrac{\eta(\tau)^3 \, 
\vartheta_{11}(\tau, 2z) \, \vartheta_{11}(\tau, a\tau+b)
}{
\vartheta_{11}(\tau, \, z+a\tau+b) \, 
\vartheta_{11}(\tau, \, z-a\tau-b) \, \vartheta_{11}(\tau,z)}
\hspace{40mm}
\nonumber
\\[2mm]
& & - \,\ 
\frac{i}{2} \,\ e^{\pi ib} \, q^{\frac{a^2}{2}} 
\sum_{k=0}^{2a-1} \, (-1)^{(2b+1)k} \, 
q^{-\frac12(k-a+\frac12)^2} \, \vartheta_{11}(\tau,z)
%\theta_{\frac12,\frac12}^{(-)}(\tau,2z)
\label{App:eqn:2023-503b}
\end{eqnarray}}
\end{enumerate}
\end{lemma}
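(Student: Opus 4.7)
The plan is to mirror the strategy used in Lemma~\ref{App:lemma:2023-501a}: Part~1 will be proved by direct manipulation of the defining series, and Part~2 will be deduced from Part~1 together with the $\widehat{osp}(3|2)$ denominator identity (which, as noted in the Introduction, supplies the complementary combination $\Phi^{(-)[\frac12,\frac12]}_1+\Phi^{(-)[\frac12,\frac12]}_2=\Phi^{(-)[\frac12,\frac12]\,\ast}$ in closed form).

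For Part~1 I would first substitute $(z_1,z_2)=(z+a\tau+b,\,z-a\tau-b)$ into \eqref{App:eqn:2023-503a1}--\eqref{App:eqn:2023-503a2}, so that $\Phi^{(-)[\frac12,\frac12]}_1$ becomes $e^{\pi i(z+b)}q^{a/2}\sum_j(-1)^je^{2\pi ijz}q^{j(j+1)/2}/(1-X_j)$ with $X_j:=(-1)^{2b}e^{2\pi iz}q^{j+a}$. I would then transform $\Phi^{(-)[\frac12,\frac12]}_2$ by the change of summation index $j\mapsto-j-2a$ (valid since $2a\in\zzz$), combined with $\tfrac{1}{1-Y^{-1}}=\tfrac{-Y}{1-Y}$, to bring its denominator into the same form $1-X_j$. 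The accumulated prefactor carries a sign $(-1)^{2a+2b+4ab}=(-1)^{2a}\cdot(-1)^{(2a+1)\cdot 2b}$; the hypothesis $(a+\tfrac12)b\in\tfrac12\zzz$ is precisely what forces the second factor to equal $+1$, and the residual $(-1)^{2a}$ is absorbed by the coefficient in $\Phi_1+(-1)^{2a}\Phi_2$. This combination then equals $e^{\pi i(z+b)}q^{a/2}\sum_j(-1)^je^{2\pi ijz}q^{j(j+1)/2}\cdot\tfrac{1-X_j^{2a}}{1-X_j}$, and the geometric factorisation $\tfrac{1-X_j^{2a}}{1-X_j}=\sum_{k=0}^{2a-1}X_j^k$ reduces the problem to $2a$ theta-like inner sums. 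Interchanging the two summations and completing the square in $j$ identifies each inner sum as a constant multiple of $\vartheta_{11}(\tau,z)$; after collecting factors one arrives at the expression $\pm ie^{\pi ib}q^{a^2/2}\vartheta_{11}(\tau,z)\sum_{k=0}^{2a-1}(-1)^{(2b+1)k}q^{-\frac12(k-a+\frac12)^2}$. When $a\in\zzz$ the hypothesis forces $b\in\zzz$, and then the involution $k\mapsto 2a-1-k$ preserves the $q$-exponent while negating the sign (since $(2b+1)(2a-1)$ is odd), so the sum vanishes; when $a\in\tfrac12\zzz_{\rm odd}$ no such pairing is available and the claimed explicit expression remains.

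For Part~2 I would invoke the $\widehat{osp}(3|2)$ denominator identity to obtain $[\Phi_1+\Phi_2](\tau,z+a\tau+b,z-a\tau-b,0)$ as the meromorphic theta ratio $i\,\eta(\tau)^3\vartheta_{11}(\tau,2z)\vartheta_{11}(\tau,a\tau+b)/[\vartheta_{11}(\tau,z+a\tau+b)\vartheta_{11}(\tau,z-a\tau-b)\vartheta_{11}(\tau,z)]$. Because $(-1)^{2a}=-1$ for $a\in\tfrac12\nnn_{\rm odd}$, Part~1 is precisely a formula for $\Phi_1-\Phi_2$, and averaging the two combinations $\Phi_1+\Phi_2$ and $\Phi_1-\Phi_2$ yields the claimed formula for $\Phi_1$ alone. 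The main technical obstacle throughout is the sign bookkeeping under the reindexing $j\mapsto-j-2a$: in particular, the identity $(-1)^{-2a}=(-1)^{2a}$ flips sign when $2a$ is odd (a subtlety invisible in the integer-$a$ setting of Lemma~\ref{App:lemma:2023-501a}), and the hypothesis $(a+\tfrac12)b\in\tfrac12\zzz$ is used precisely to ensure that the accumulated sign collapses, so that the clean factorisation $1-X_j^{2a}=(1-X_j)\sum_{k=0}^{2a-1}X_j^k$ is available.
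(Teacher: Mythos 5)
Your proposal is correct and follows essentially the same route as the paper's proof: Part 1 by rewriting $\Phi^{(-)[\frac12,\frac12]}_2$ via the reindexing $j\mapsto -j-2a$ so that $\Phi_1+(-1)^{2a}\Phi_2$ collapses to the geometric factor $\sum_{k=0}^{2a-1}X_j^k$, completing the square to get multiples of $\vartheta_{11}(\tau,z)$, and killing the coefficient sum by the involution $k\mapsto 2a-1-k$ when $a\in\zzz$; Part 2 by combining this (which gives $\Phi_1-\Phi_2$ since $(-1)^{2a}=-1$) with the $\widehat{osp}(3|2)$ denominator identity for $\Phi_1+\Phi_2$ and averaging. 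The sign bookkeeping, including the role of the hypothesis $(a+\frac12)b\in\frac12\zzz$, matches the paper's computation.
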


\begin{proof} 1) \,\ By \eqref{App:eqn:2023-503a1} and 
\eqref{App:eqn:2023-503a2}, one has 
%\hspace{5mm} (line=920) %%
{\allowdisplaybreaks
\begin{eqnarray}
& & \hspace{-15mm}
\Phi^{(-)[\frac12,\frac12]}_i(\tau, \, z+a\tau+b, \, z-a\tau-b, \, 0)
\, = \, 
\sum_{j \in \zzz} (-1)^j \, 
\frac{\displaystyle e^{2\pi ijz} e^{\pi i(z+a\tau+b)}
q^{\frac12 (j^2+j)}
}{1-e^{2\pi i(z+a\tau+b)} \, q^j}
\nonumber
\\[2mm]
&=&
e^{\pi ib} \, q^{\frac{a}{2}} \, e^{\pi iz} \, 
\sum_{j \in \zzz} \, (-1)^j \, 
\frac{e^{2\pi ijz}q^{\frac12(j^2+j)}}{
1-(-1)^{2b} \, e^{2\pi iz} \, q^{j+a}}
\label{App:eqn:2023-504b}
% Phi_2
\\[2mm]
& & \hspace{-15mm}
\Phi^{(-)[\frac12,\frac12]}_2(\tau, \, z+a\tau+b, \, z-a\tau-b, \, 0)
\,\ = \,\ 
\sum_{j \in \zzz} (-1)^j
\frac{\displaystyle e^{-2\pi ijz} \, e^{-\pi i(z-a\tau-b)}
q^{\frac12(j^2+j)}}{1-e^{-2\pi i(z-a\tau-b)}q^j}
\nonumber
\\[2mm]
&=&
e^{\pi ib} \, q^{\frac{a}{2}} \, 
\sum_{j \in \zzz} \, (-1)^j \, 
\frac{e^{-2\pi ijz} \, e^{-\pi iz} \, q^{\frac12(j^2+j)}}{
1-(-1)^{2b} \, e^{-2\pi iz} \, q^{j+a}}
\nonumber
\\[2mm]
&=&
- \, (-1)^{2b} \, e^{\pi ib} \, q^{\frac{a}{2}} \, e^{\pi iz} \, 
\sum_{j \in \zzz} \, (-1)^j \, 
\frac{e^{-2\pi ijz} \, q^{\frac12(j^2+j)-(j+a)}
}{1-(-1)^{2b} \, e^{2\pi iz} \, q^{-(j+a)}}
\nonumber
\\[2mm]
&=&
- \, (-1)^{2a} \, (-1)^{2b} \, e^{\pi ib} \, 
q^{\frac{a}{2}} \, e^{\pi iz} \, 
\sum_{j \in \zzz} \, (-1)^j \, 
\frac{e^{2\pi ijz}q^{\frac12(j^2+j)}}{
1-(-1)^{2b} \, e^{2\pi iz} \, q^{j+a}} 
\, \cdot \, (e^{2\pi iz} \, q^{j+a})^{2a}
\nonumber
\end{eqnarray}}
so
{\allowdisplaybreaks
\begin{eqnarray*}
& & \hspace{-10mm}
\Phi^{(-)[\frac12,\frac12]}_1(\tau, \, z+a\tau+b, \, z-a\tau-b, \, 0)
\, + \, (-1)^{2a} \, 
\Phi^{(-)[\frac12,\frac12]}_2(\tau, \, z+a\tau+b, \, z-a\tau-b, \, 0)
\\[3mm]
&=& \hspace{4mm}
e^{\pi ib} \, q^{\frac{a}{2}} \, e^{\pi iz} \, 
\sum_{j \in \zzz} \, (-1)^j \, 
\frac{e^{2\pi ijz}q^{\frac12(j^2+j)}}{
1-(-1)^{2b} \, e^{2\pi iz} \, q^{j+a}} 
\\[2mm]
& &- \,\ 
e^{\pi ib} \, q^{\frac{a}{2}} \, e^{\pi iz} \, 
\sum_{j \in \zzz} \, (-1)^j \, 
\frac{e^{2\pi ijz}q^{\frac12(j^2+j)}}{
1-(-1)^{2b} \, e^{2\pi iz} \, q^{j+a}} 
\, \cdot \, \big((-1)^{2b} \, e^{2\pi iz} \, q^{j+a}\big)^{2a}
\\[2mm]
&=&
e^{\pi ib} \, q^{\frac{a}{2}} \, e^{\pi iz} \, 
\sum_{j \in \zzz} \, (-1)^j \, e^{2\pi ijz}q^{\frac12(j^2+j)} \cdot 
\frac{1- \big((-1)^{2b} \, e^{2\pi iz} \, q^{j+a}\big)^{2a}
}{1-(-1)^{2b} \, e^{2\pi iz} \, q^{j+a}} 
\\[1mm]
&=&
e^{\pi ib} \, q^{\frac{a}{2}}
\sum_{0 \, \leq \, k \, \leq \, 2a-1} \, \sum_{j \in \zzz} \, 
(-1)^j \, (-1)^{2bk} \, e^{2\pi ijz} \, e^{2\pi ikz} \, e^{\pi iz}
q^{\frac12(j^2+j)+k(j+a)}
\\[1mm]
&=&
e^{\pi ib} \, q^{\frac12a^2}
\sum_{0 \, \leq \, k \, \leq \, 2a-1} (-1)^{2bk} \, q^{-\frac12(k-a+\frac12)^2} 
\sum_{j \in \zzz} \, (-1)^j \, 
e^{2\pi i (j+k+\frac12)z} \, q^{\frac12(j+k+\frac12)^2} 
\\[2mm]
&=&
e^{\pi ib} \, q^{\frac12a^2}
\sum_{0 \, \leq \, k \, \leq \, 2a-1} \, (-1)^{2bk} \, 
q^{-\frac12(k-a+\frac12)^2} 
\theta_{k+\frac12,\frac12}^{(-)}(\tau, 2z)
\\[2mm]
&=&
e^{\pi ib} \, q^{\frac12a^2}
\underbrace{
\sum_{0 \, \leq \, k \, \leq \, 2a-1} \, (-1)^{(2b+1)k} \, 
q^{-\frac12(k-a+\frac12)^2}}_{\substack{\hspace{6mm}
|| \,\ put \\[1mm] (A)
}} \,\ 
\underbrace{\theta_{\frac12,\frac12}^{(-)}(\tau, 2z)}_{
\substack{|| \\[1mm] {\displaystyle \hspace{1mm}
-i \vartheta_{11}(\tau,z)
}}}
\end{eqnarray*}}
Here $(A)$ is computed by putting $k \, = \, 2a-1-k'$ as follows :
{\allowdisplaybreaks
\begin{eqnarray*}
(A) &=& 
\sum_{0 \, \leq \, k' \, \leq \, 2a-1} 
(-1)^{(2b+1)(2a-1-k')} \, 
q^{-\frac12(2a-1-k'-a+\frac12)^2}
\\[2mm]
&=&
(-1)^{2a-1} 
\sum_{0 \, \leq \, k \, \leq \, 2a-1} (-1)^{(2b+1)k'} \, 
q^{-\frac12(k'-a+\frac12)^2}
\,\ = \,\ - \, (-1)^{2a} \, \times \, (A)
\end{eqnarray*}}
Thus we have \, $(A)=0$ \, if $a \in \zzz$, proving 1). 
%\hspace{30mm} (line=1009) %%

\medskip

\noindent
2) \quad In the case $a \, \in \, \frac12 \nnn_{\rm odd}$, one has
\begin{subequations}
{\allowdisplaybreaks
\begin{eqnarray}
& & \hspace{-20mm}
\big[\Phi^{(-)[\frac12,\frac12]}_1
\, - \, \Phi^{(-)[\frac12,\frac12]}_2\big]
(\tau, \, z+a\tau+b, \, z-a\tau-b, \, 0)
\nonumber
\\[2mm]
&=&
- \, i \, 
e^{\pi ib} \, q^{\frac12a^2}
\sum\limits_{0 \, \leq \, k \, \leq \, 2a-1} \, (-1)^{(2b+1)k} \, 
q^{-\frac12(k-a+\frac12)^2} \, 
\vartheta_{11}(\tau,z)
%\theta_{\frac12,\frac12}^{(-)}(\tau, 2z)
\label{App:eqn:2023-504a1}
\end{eqnarray}}
by 1).  Also, by the formula (2.2a) in \cite{W2022e} (i.e, the 
$\widehat{osp}(3|2)$-denominator identity), one has
{\allowdisplaybreaks
\begin{eqnarray}
& & \hspace{-15mm}
\big[\Phi^{(-)[\frac12,\frac12]}_1+\Phi^{(-)[\frac12,\frac12]}_2\big]
(\tau, \, z+a\tau+b, \, z-a\tau-b, \, 0) 
\nonumber
\\[2mm]
&=&
i \, \eta(\tau)^3 \, 
\frac{\vartheta_{11}(\tau, 2z) \, \vartheta_{11}(\tau, \, a\tau+b)
}{
\vartheta_{11}(\tau, z+a\tau+b) \, 
\vartheta_{11}(\tau, z-a\tau-b) \, 
\vartheta_{11}(\tau, \, z)}
\label{App:eqn:2023-504a2}
\end{eqnarray}}
\end{subequations}
Then the claim 2) follows from \eqref{App:eqn:2023-504a1} and 
\eqref{App:eqn:2023-504a2}.
\end{proof}

\medskip

%(line=1064)
%(label=App:prop:2023-503a)

%(label=App:eqn:2023-503c1)
%(label=App:eqn:2023-503c2)

%(label=App:eqn:2023-503c3)
%(label=App:eqn:2023-503c4)
\begin{prop} \,\ 
\label{App:prop:2023-503a}
The following formulas hold for $n \in \zzz_{\geq 0}$ :
\begin{subequations}
\begin{enumerate}
\item[{\rm 1)}] \,\ $\sum\limits_{j \in \zzz} \, (-1)^j \, 
\dfrac{e^{2\pi i(j+\frac12)z}q^{\frac12(j+\frac12)^2}
}{1-e^{2\pi iz} \, q^{j+n+\frac12}}$
\begin{equation} \hspace{-10mm}
= \, \frac12 \, q^{\frac12 n^2} \, \Bigg\{
(-1)^n \, 
\vartheta_{01}(\tau,0) \cdot 
%\eta(\tau) \, \bigg[\frac{\eta(\frac{\tau}{2})}{\eta(\tau)}\bigg]^2
\frac{\vartheta_{00}(\tau, z) \, \vartheta_{10}(\tau, z)
}{\vartheta_{01}(\tau,z)} 
\, - \, i \, 
\sum_{k=0}^{2n} \, (-1)^k \, q^{-\frac12(k-n)^2} \, 
\vartheta_{11}(\tau,z)\Bigg\}
\label{App:eqn:2023-503c1}
\end{equation}
\item[{\rm 2)}] \quad 
$\sum\limits_{j \in \zzz} \, 
\dfrac{e^{2\pi i(j+\frac12)z} \, q^{\frac12(j+\frac12)^2}
}{1+e^{2\pi iz} \, q^{j+n+\frac12}}$
\begin{equation} \hspace{-10mm}
= \,\ 
\frac12 \, q^{\frac12 n^2} \, \Bigg\{
- i \, (-1)^n \, 
\vartheta_{01}(\tau,0) \cdot 
%\eta(\tau) \, \bigg[\frac{\eta(\frac{\tau}{2})}{\eta(\tau)}\bigg]^2
\frac{\vartheta_{01}(\tau, z) \, \vartheta_{11}(\tau, z)
}{\vartheta_{00}(\tau,z)}
\, + \, 
\sum_{k=0}^{2n} \, (-1)^k \, q^{-\frac12(k-n)^2} \, \vartheta_{10}(\tau,z)
\Bigg\}
\label{App:eqn:2023-503c2}
\end{equation}
\item[{\rm 3)}] \quad $\sum\limits_{j \in \zzz} \, (-1)^j \, 
\dfrac{e^{2\pi ijz} \, q^{\frac12 j^2}}{1-e^{2\pi iz} q^{j+n}}$
\begin{equation} \hspace{-10mm}
=
\frac12 \, q^{\frac12 n^2} \Bigg\{
-i \, (-1)^n \, 
\vartheta_{01}(\tau,0) \cdot 
%\eta(\tau) \, \bigg[\frac{\eta(\frac{\tau}{2})}{\eta(\tau)}\bigg]^2
\frac{\vartheta_{10}(\tau, z)\vartheta_{00}(\tau, z)
}{\vartheta_{11}(\tau, z)}
+ 
\sum_{k=0}^{2n} (-1)^k \, q^{-\frac12(k-n)^2} \, 
\vartheta_{01}(\tau,z) \Bigg\}
\label{App:eqn:2023-503c3}
\end{equation}
\item[{\rm 4)}] \quad $\sum\limits_{j \in \zzz} \, 
\dfrac{e^{2\pi ijz} \, q^{\frac12 j^2}}{1+e^{2\pi iz} q^{j+n}}$
\begin{equation} \hspace{-10mm}
= \,\ 
\frac12 \, q^{\frac12 n^2} \Bigg\{
i \, (-1)^n \, 
\vartheta_{01}(\tau,0) \cdot 
%\eta(\tau) \, \bigg[\frac{\eta(\frac{\tau}{2})}{\eta(\tau)}\bigg]^2 
\frac{\vartheta_{11}(\tau,z) \, \vartheta_{01}(\tau,z)
}{\vartheta_{10}(\tau,z)}
\, + \, 
\sum_{k=0}^{2n} (-1)^k \, q^{-\frac12(k-n)^2} \, \vartheta_{00}(\tau,z)
\Bigg\}
\label{App:eqn:2023-503c4}
\end{equation}
\end{enumerate}
\end{subequations}
\end{prop}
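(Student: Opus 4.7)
The plan is to prove part 1) by specializing Lemma \ref{App:lemma:2023-503a} part 2), and then obtain parts 2), 3) and 4) from it by elementary substitutions of $z$.

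For part 1), I would first match the LHS with a specialization of $\Phi^{(-)[\frac12,\frac12]}_1$: using $q^{(j+1/2)^2/2} = q^{1/8}q^{(j^2+j)/2}$ together with equation \eqref{App:eqn:2023-504b}, the LHS equals $q^{-1/8-n/2}\,\Phi^{(-)[\frac12,\frac12]}_1(\tau,\,z+(n+\tfrac12)\tau,\,z-(n+\tfrac12)\tau,\,0)$. Lemma \ref{App:lemma:2023-503a} part 2) with $a = n+\tfrac12$, $b = 0$ then expresses this quantity as the sum of a ratio of four $\vartheta_{11}$'s and a finite theta-sum. The finite-sum part becomes the second term of the target after multiplication by $q^{-1/8-n/2}$. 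For the theta-ratio part, I would compress $\vartheta_{11}(\tau,\,z\pm(n+\tfrac12)\tau)$ into an explicit phase times $\vartheta_{01}(\tau,z)$, using the quasi-periodicity identities $\vartheta_{11}(\tau,\,w+\tau) = -q^{-1/2}e^{-2\pi iw}\vartheta_{11}(\tau,w)$ and $\vartheta_{11}(\tau,\,w+\tfrac{\tau}{2}) = -iq^{-1/8}e^{-\pi iw}\vartheta_{01}(\tau,w)$ together with the oddness $\vartheta_{11}(\tau,-w)=-\vartheta_{11}(\tau,w)$; I would then specialize $w=0$ to evaluate the remaining factor $\vartheta_{11}(\tau,(n+\tfrac12)\tau)$, and finally apply the Jacobi duplication identity $\eta(\tau)^3\vartheta_{11}(\tau,2z) = \vartheta_{00}(\tau,z)\vartheta_{01}(\tau,z)\vartheta_{10}(\tau,z)\vartheta_{11}(\tau,z)$. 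The ratio collapses to $\tfrac12(-1)^n q^{n^2/2}\vartheta_{01}(\tau,0)\vartheta_{00}(\tau,z)\vartheta_{10}(\tau,z)/\vartheta_{01}(\tau,z)$, which is the first term of the RHS.

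For part 3), the change of summation index $j\mapsto j-1$ combined with $z\mapsto z+\tfrac{\tau}{2}$ yields the identity
\[
\sum_{j\in\zzz} (-1)^j \frac{e^{2\pi ijz}q^{j^2/2}}{1-e^{2\pi iz}q^{j+n}} \,=\, -\,e^{\pi iz}q^{1/8}\,\Bigg[\sum_{j\in\zzz} (-1)^j \frac{e^{2\pi i(j+1/2)z}q^{(j+1/2)^2/2}}{1-e^{2\pi iz}q^{j+n+1/2}}\Bigg]_{z\mapsto z+\tau/2}.
\]
Applying this to the RHS of part 1) and simplifying with the four standard $\tau/2$-shift identities for $\vartheta_{ij}(\tau,z+\tfrac{\tau}{2})$, the common factor $q^{-1/8}e^{-\pi iz}$ arising from the shifts is cancelled by the prefactor $-e^{\pi iz}q^{1/8}$ and one recovers exactly the RHS of part 3). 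Parts 2) and 4) then follow from parts 1) and 3) respectively by the simpler substitution $z\mapsto z+\tfrac12$, using $e^{\pi i(j+1/2)}=(-1)^j\,i$ on the LHS (which accounts for the absent $(-1)^j$ in part 2) and supplies the overall $-i$-factor there) together with the half-period permutations $\vartheta_{00}\leftrightarrow\vartheta_{01}$, $\vartheta_{10}\leftrightarrow\vartheta_{11}$ (with a sign change $\vartheta_{11}(z+\tfrac12)=-\vartheta_{10}(z)$) on the RHS.

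The main obstacle is not conceptual but entirely bookkeeping: the theta shift identities, the index reindexings and the overall prefactors each contribute phases of the form $\pm i$ and powers of $q$ and $e^{\pm\pi iz}$, and these must combine precisely to produce the claimed $(-1)^n$, $q^{n^2/2}$ and $\vartheta_{01}(\tau,0)$ factors on each RHS. The most delicate point is to fix, once and for all, a consistent sign convention for $\vartheta_{11}$: since the sign of $\vartheta_{11}(\tau,z+\tfrac{\tau}{2})$ propagates into all four answers through the $\tau/2$-shift step, any sign error there must be traced through the whole argument.
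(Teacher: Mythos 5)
Your proposal is correct and follows essentially the same route as the paper: part 1) by specializing Lemma \ref{App:lemma:2023-503a}\,2) via \eqref{App:eqn:2023-504b} with $a=n+\tfrac12$, $b=0$, and parts 2)--4) by half-period substitutions in $z$. The only (immaterial) difference is that the paper obtains 3) from 1) by $z\mapsto z-\tfrac{\tau}{2}$ rather than your $z\mapsto z+\tfrac{\tau}{2}$ with the reindexing $j\mapsto j-1$.
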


\begin{proof} \,\  The claim 1) follows from \eqref{App:eqn:2023-503b}
and \eqref{App:eqn:2023-504b} by letting $a=n+\frac12$ and $b=0$. 
The claim 2) (resp. 3)) is obtained from 1) by replacing $z$ with 
$z+\frac12$ (resp. $z-\frac{\tau}{2}$), and the claim 4)
is obtained from 3) by replacing $z$ with $z+\frac12$. 
\end{proof}

\section{Functions $f_i(\tau,z)$, \, $g_i(\tau,z)$ and $h_i(\tau,z)$}
%\markright{\S \ref{sec:fi:gi:hi}. Functions 
%$f_i(\tau,z)$, \, $g_i(\tau,z)$ and $h_i(\tau,z)$}
\label{sec:fi:gi:hi}
%(label=sec:fi:gi:hi)
%(line=1154)

%\medskip

Letting $n=0$ in the formulas in Propositions \ref{App:prop:2023-501a}
and \ref{App:prop:2023-503a}, we consider the following functions:
{\allowdisplaybreaks
\begin{eqnarray*}
& & \hspace{-3mm}
f_1(\tau,z) := 
\sum_{j \in \zzz}\frac{e^{2\pi ijz} q^{\frac12j^2}}{1-e^{2\pi iz}q^j}
\, = \, 
\frac12 \, \bigg\{
- \, i \, \vartheta_{00}(\tau,0) \cdot 
\dfrac{\vartheta_{01}(\tau,z) \, \vartheta_{10}(\tau,z)
}{\vartheta_{11}(\tau,z)}
+  \vartheta_{00}(\tau,z)\bigg\}
% f_2
\\[2mm]
& & \hspace{-3mm}
f_2(\tau,z) := \sum_{j \in \zzz} (-1)^j \, 
\dfrac{e^{2\pi i(j+\frac12)z}q^{\frac12(j+\frac12)^2}
}{1+e^{2\pi iz}q^{j+\frac12}}
\, = \, \frac12 \, \bigg\{
\vartheta_{00}(\tau,0)
\cdot 
\dfrac{\vartheta_{01}(\tau,z) \, \vartheta_{10}(\tau,z)
}{\vartheta_{00}(\tau,z)}
\, - \, i \,  \vartheta_{11}(\tau,z)\bigg\}
% f_3
\\[2mm]
& & \hspace{-3mm}
f_3(\tau,z) := \sum_{j \in \zzz} (-1)^j
\dfrac{e^{2\pi ijz} q^{\frac12j^2}}{1+e^{2\pi iz}q^{j}}
\, = \,\ \frac12 \, \bigg\{
i \, \vartheta_{00}(\tau,0)
\cdot 
\dfrac{\vartheta_{00}(\tau,z) \, \vartheta_{11}(\tau,z)
}{\vartheta_{10}(\tau,z)}
\, + \, 
\vartheta_{01}(\tau,z)\bigg\}
% f_4
\\[2mm]
& & \hspace{-3mm}
f_4(\tau,z) := \sum_{j \in \zzz}
\dfrac{e^{2\pi i(j+\frac12)z} q^{\frac12(j+\frac12)^2}}{
1-e^{2\pi iz}q^{j+\frac12}}
= \, \frac12 \, \bigg\{
- \, i \, \vartheta_{00}(\tau,0)
\cdot 
\dfrac{\vartheta_{00}(\tau,z) \, \vartheta_{11}(\tau,z)
}{\vartheta_{01}(\tau,z)}
\, + \, 
\vartheta_{10}(\tau,z)\bigg\}
% g_1
\\[2mm]
& & \hspace{-3mm}
g_1(\tau,z) := \sum_{j \in \zzz}
\dfrac{e^{2\pi i(j-\frac12)z} q^{\frac12j(j-1)}}{1-e^{2\pi iz}q^{j}}
\, = \, 
\frac12 \, \bigg\{
- \, i \, \vartheta_{10}(\tau,0)
\cdot 
\dfrac{\vartheta_{01}(\tau,z) \, \vartheta_{00}(\tau,z)
}{\vartheta_{11}(\tau,z)}
\, + \, 
2 \, q^{-\frac18} \, \vartheta_{10}(\tau,z)\bigg\}
% g_2
\\[2mm]
& & \hspace{-3mm}
g_2(\tau,z) := \sum_{j \in \zzz} 
(-1)^j \, \dfrac{e^{2\pi i(j-\frac12)z} \, q^{\frac12j(j-1)} 
}{1+e^{2\pi iz}q^{j}}
\\[2mm]
& &\hspace{11mm}
= \frac12 \, \bigg\{
- \vartheta_{10}(\tau,0)
\cdot 
\dfrac{\vartheta_{01}(\tau,z) \, \vartheta_{00}(\tau,z)
}{\vartheta_{10}(\tau,z)}
+ 
2 \, i \, q^{-\frac18} \, \vartheta_{11}(\tau,z)\bigg\}
% g_3
\\[2mm]
& & \hspace{-3mm}
g_3(\tau,z) := \sum_{j \in \zzz}
\dfrac{e^{2\pi ijz} q^{\frac12(j^2-\frac14)}}{1-e^{2\pi iz}q^{j+\frac12}}
\, = \, 
\frac12 \, \bigg\{
- \, i \, \vartheta_{10}(\tau,0)
\cdot 
\dfrac{\vartheta_{10}(\tau,z) \, \vartheta_{11}(\tau,z)
}{\vartheta_{01}(\tau,z)}
\, + \, 
2 \, q^{-\frac18} \vartheta_{00}(\tau,z)\bigg\}
% g_4
\\[2mm]
& & \hspace{-3mm}
g_4(\tau,z) := \sum_{j \in \zzz} \, (-1)^j \, 
\dfrac{e^{2\pi ijz} q^{\frac12(j^2-\frac14)}}{1+e^{2\pi iz}q^{j+\frac12}}
\, = \,\ \frac12 \, \bigg\{
i \, \vartheta_{10}(\tau,0)
\cdot 
\frac{\vartheta_{11}(\tau,z) \, \vartheta_{10}(\tau,z)
}{\vartheta_{00}(\tau,z)}
\, + \, 
2 \, q^{-\frac18} \, \vartheta_{01}(\tau,z)\bigg\}
% h_1
\\[2mm]
& & \hspace{-3mm}
h_1(\tau,z) := \sum\limits_{j \in \zzz} \, (-1)^j \, 
\dfrac{e^{2\pi ijz} \, q^{\frac12 j^2}}{1-e^{2\pi iz} q^{j}}
\,\ = \,\ \frac12 \, \bigg\{
-i \, \vartheta_{01}(\tau,0) \cdot 
\dfrac{\vartheta_{10}(\tau, z)\vartheta_{00}(\tau, z)
}{\vartheta_{11}(\tau, z)}
\, + \, 
\vartheta_{01}(\tau,z)\bigg\}
% h_2
\\[2mm]
& & \hspace{-3mm}
h_2(\tau,z) := \sum_{j \in \zzz} \, (-1)^j \, 
\dfrac{e^{2\pi i(j+\frac12)z}q^{\frac12(j+\frac12)^2}
}{1-e^{2\pi iz} \, q^{j+\frac12}}
\,\ = \,\ 
\frac12 \, \bigg\{
\vartheta_{01}(\tau,0) \cdot 
\dfrac{\vartheta_{00}(\tau, z) \, \vartheta_{10}(\tau, z)
}{\vartheta_{01}(\tau,z)} \, 
- \, 
i \, \vartheta_{11}(\tau,z)\bigg\}
% h_3
\\[2mm]
& & \hspace{-3mm}
h_3(\tau,z) := \sum_{j \in \zzz} \, 
\dfrac{e^{2\pi ijz} \, q^{\frac12 j^2}}{1+e^{2\pi iz} q^{j}}
\,\ = \,\ \frac12 \, \bigg\{
i \, \vartheta_{01}(\tau,0) \cdot 
\dfrac{\vartheta_{11}(\tau,z) \, \vartheta_{01}(\tau,z)
}{\vartheta_{10}(\tau,z)}
\, + \, 
\vartheta_{00}(\tau,z)\bigg\}
% h_4
\\[2mm]
& & \hspace{-3mm}
h_4(\tau,z) := \sum_{j \in \zzz} \, 
\dfrac{e^{2\pi i(j+\frac12)z} \, q^{\frac12(j+\frac12)^2}
}{1+e^{2\pi iz} \, q^{j+\frac12}}
\,\ = \,\ 
\frac12 \, \bigg\{
- i \, \vartheta_{01}(\tau,0) \cdot 
\dfrac{\vartheta_{01}(\tau, z) \, \vartheta_{11}(\tau, z)
}{\vartheta_{00}(\tau,z)}
\, + \, 
\vartheta_{10}(\tau,z)\bigg\}
\end{eqnarray*}}

\medskip

We note that the formulas in Propositions \ref{App:prop:2023-501a}
and \ref{App:prop:2023-503a} can be recovered from 
these formulas for $f_i(\tau,z)$, $g_i(\tau,z)$ and $h_i(\tau,z)$ 
by using Lemma 2.4 in \cite{W2022c}.
The modular transformation of these functions are computed
by using the modular tranformation properties of 
$\vartheta_{ab}$ in the Mumford's book \cite{Mum} and 
obtained as follows:

\vspace{1mm}

%(line=1330)
%(label=App:prop:2023-503b)
\begin{prop} 
%\label{App:prop:2023-503b}
\begin{enumerate}
\item[{\rm 1)}] \quad $S$\text{-}{\rm transformation \, :}
{\allowdisplaybreaks
\begin{eqnarray*}
& & \left\{
\begin{array}{rcl}
f_1(-\frac{1}{\tau}, \frac{z}{\tau})
&=& 
e^{\frac{\pi iz^2}{\tau}} \, \big\{
\tau \, f_1(\tau,z)
\, - \, \frac12 \, \tau \, \vartheta_{00}(\tau,z)
\, + \, \frac12 \, (-i\tau)^{\frac12} \, \vartheta_{00}(\tau,z)\big\}
\\[2mm]
f_2(-\frac{1}{\tau}, \frac{z}{\tau})
&=& 
e^{\frac{\pi iz^2}{\tau}} \, \big\{
- \, i \, \tau \, f_2(\tau,z)
\, + \, \frac12 \, \tau \, \vartheta_{11}(\tau,z)
\, - \, \frac12 \, (-i\tau)^{\frac12} \, \vartheta_{11}(\tau,z)\big\}
\\[2mm]
f_3(-\frac{1}{\tau}, \frac{z}{\tau})
&=& 
e^{\frac{\pi iz^2}{\tau}} \, \big\{
\tau \, f_4(\tau,z)
\, - \, \frac12 \, \tau \, \vartheta_{10}(\tau,z)
\, + \, \frac12 \, (-i\tau)^{\frac12} \, \vartheta_{10}(\tau,z)\big\}
\\[2mm]
f_4(-\frac{1}{\tau}, \frac{z}{\tau})
&=& 
e^{\frac{\pi iz^2}{\tau}} \, \big\{
\tau \, f_3(\tau,z)
\, - \, \frac12 \, \tau \, \vartheta_{01}(\tau,z)
\, + \, \frac12 (-i\tau)^{\frac12} \, \vartheta_{01}(\tau,z)\big\}
\end{array} \right.
% g
\\[2mm]
& & \left\{
\begin{array}{rcl}
g_1(-\frac{1}{\tau}, \frac{z}{\tau})
&=& e^{\frac{\pi iz^2}{\tau}} \, \big\{
\tau \, h_1(\tau,z) \, - \, \frac12 \, \tau \, \vartheta_{01}(\tau,z)\big\}
\,\ + \,\ 
(-i\tau)^{\frac12} \, 
e^{\frac{\pi i}{\tau}(z^2+\frac14)} \, \vartheta_{01}(\tau,z)
\\[2mm]
g_2(-\frac{1}{\tau}, \frac{z}{\tau})
&=& 
e^{\frac{\pi iz^2}{\tau}} \, \big\{
i \tau \, h_2(\tau,z) \, - \, \frac12 \, \tau \, \vartheta_{11}(\tau,z)\big\}
\,\ + \,\ 
(-i\tau)^{\frac12} \, e^{\frac{\pi i}{\tau}(z^2+\frac14)} \, 
\vartheta_{11}(\tau,z)
\\[2mm]
g_3(-\frac{1}{\tau}, \frac{z}{\tau})
&=& 
e^{\frac{\pi iz^2}{\tau}} \, \big\{
\tau \, h_3(\tau,z) \, - \, \frac12 \, \tau \, \vartheta_{00}(\tau,z)\big\}
\,\ + \,\ 
(-i\tau)^{\frac12} \, e^{\frac{\pi i}{\tau}(z^2+\frac14)} \, 
\vartheta_{00}(\tau,z)
\\[2mm]
g_4(-\frac{1}{\tau}, \frac{z}{\tau})
&=& 
e^{\frac{\pi iz^2}{\tau}} \, \big\{
\tau \, h_4(\tau,z) \, - \, \frac12 \, \tau \, \vartheta_{10}(\tau,z)\big\}
\,\ + \,\ 
(-i\tau)^{\frac12} \, e^{\frac{\pi i}{\tau}(z^2+\frac14)} \, 
\vartheta_{10}(\tau,z)
\end{array} \right.
% h
\\[2mm]
& & \left\{
\begin{array}{rcl}
h_1(-\frac{1}{\tau}, \frac{z}{\tau})
&=& 
e^{\frac{\pi iz^2}{\tau}} \, \big\{
\tau \, g_1(\tau,z) 
\,\ - \,\ \tau \, q^{-\frac18} \, \vartheta_{10}(\tau,z)
\,\ + \,\ 
\frac12 \, (-i\tau)^{\frac12} \, \vartheta_{10}(\tau,z)
\big\}
\\[2mm]
h_2(-\frac{1}{\tau}, \frac{z}{\tau})
&=& 
e^{\frac{\pi iz^2}{\tau}} \, \big\{
i \, \tau \, g_2(\tau,z) 
\,\ + \,\ \tau \, q^{-\frac18} \, \vartheta_{11}(\tau,z)
\,\ - \,\ 
\frac12 \, (-i\tau)^{\frac12} \, \vartheta_{11}(\tau,z)
\big\}
\\[2mm]
h_3(-\frac{1}{\tau}, \frac{z}{\tau})
&=& 
e^{\frac{\pi iz^2}{\tau}} \, \big\{
\tau \, g_3(\tau,z) 
\,\ - \,\ \tau \, q^{-\frac18} \, \vartheta_{00}(\tau,z)
\,\ + \,\ 
\frac12 \, (-i\tau)^{\frac12} \, \vartheta_{00}(\tau,z)
\big\}
\\[2mm]
h_4(-\frac{1}{\tau}, \frac{z}{\tau})
&=& 
e^{\frac{\pi iz^2}{\tau}} \, \big\{
\tau \, g_4(\tau,z) 
\,\ - \,\ \tau \, q^{-\frac18} \, \vartheta_{01}(\tau,z)
\,\ + \,\ 
\tfrac12 \, (-i\tau)^{\frac12} \, \vartheta_{01}(\tau,z)
\big\}
\end{array} \right.
\end{eqnarray*}}
\item[{\rm 2)}] \quad $T$\text{-}{\rm transformation \, :} 
{\allowdisplaybreaks
\begin{eqnarray*}
& & \hspace{-10mm}
\left\{
\begin{array}{ccr}
f_1(\tau+1,z) &=& h_1(\tau,z) \\[1mm]
f_2(\tau+1,z) &=& e^{\frac{\pi i}{4}} \, h_2(\tau,z) \\[1mm]
f_3(\tau+1,z) &=& h_3(\tau,z) \\[1mm]
f_4(\tau+1,z) &=& e^{\frac{\pi i}{4}} \, h_4(\tau,z)
\end{array}\right. \hspace{10mm} \left\{
\begin{array}{ccr}
g_1(\tau+1,z) &=& g_1(\tau,z) \\[1mm]
g_2(\tau+1,z) &=& g_2(\tau,z) \\[1mm]
g_3(\tau+1,z) &=& e^{-\frac{\pi i}{4}} \, g_4(\tau,z) \\[1mm]
g_4(\tau+1,z) &=& e^{-\frac{\pi i}{4}} \, g_3(\tau,z)
\end{array}\right.
\\[2mm]
& & \hspace{-10mm}
\left\{
\begin{array}{ccr}
h_1(\tau+1,z) &=& f_1(\tau,z) \\[1mm]
h_2(\tau+1,z) &=& e^{\frac{\pi i}{4}} \, f_2(\tau,z) \\[1mm]
h_3(\tau+1,z) &=& f_3(\tau,z) \\[1mm]
h_4(\tau+1,z) &=& e^{\frac{\pi i}{4}} \, f_4(\tau,z)
\end{array}\right.
\end{eqnarray*}}
\end{enumerate}
\end{prop}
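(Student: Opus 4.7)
The plan is a direct substitution: Section~4 has already expressed each of the twelve functions $f_i,g_i,h_i$ as an explicit closed-form combination of Jacobi theta functions, so the modular transformation laws reduce to applying the classical transformation formulas for $\vartheta_{ab}(\tau,z)$ recorded in Mumford~\cite{Mum}. Uniformly, every identity has the shape
\[
F(\tau,z)=\tfrac12\Bigl\{-i\,\vartheta_{ab}(\tau,0)\cdot\frac{\vartheta_{cd}(\tau,z)\,\vartheta_{ef}(\tau,z)}{\vartheta_{gh}(\tau,z)}+c_\tau\,\vartheta_{kl}(\tau,z)\Bigr\},
\]
with $c_\tau=1$ for $f_i$ and $h_i$ and $c_\tau=2q^{-1/8}$ for $g_i$, so I would apply the transformation laws term by term and then reassemble.

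Under $S:(\tau,z)\mapsto(-1/\tau,z/\tau)$ each $\vartheta_{ab}(\tau,z)$ contributes a factor $(-i\tau)^{1/2}e^{\pi iz^2/\tau}$ (with an extra $-i$ if $(ab)=(11)$) and undergoes the index permutation $00\to00$, $01\leftrightarrow10$, $11\to11$. Counting factors, the theta quotient $\vartheta_{ab}(\tau,0)\cdot\frac{\vartheta_{cd}\vartheta_{ef}}{\vartheta_{gh}}$ (three numerator factors, one denominator factor) scales by exactly $\tau\cdot e^{\pi iz^2/\tau}$, whereas the single theta $\vartheta_{kl}$ in the correction term scales only by $(-i\tau)^{1/2}e^{\pi iz^2/\tau}$. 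For the $f_i,h_i$ identities this mismatch between $\tau$ and $(-i\tau)^{1/2}$ is exactly the correction $\tfrac12\bigl[(-i\tau)^{1/2}-\tau\bigr]\vartheta_{kl}(\tau,z)$ appearing on the right-hand side, while the index permutation sends $f_i\to f_i$ (since $\vartheta_{00}(\tau,0)$ is $S$-fixed up to a scalar) and $g_i\leftrightarrow h_i$ (via $\vartheta_{01}(\tau,0)\leftrightarrow\vartheta_{10}(\tau,0)$). For the $g_i$ family the extra $q^{-1/8}=e^{-\pi i\tau/4}$ transforms into $e^{\pi i/(4\tau)}$, which combines with the $e^{\pi iz^2/\tau}$ on the transformed single theta to produce the prefactor $(-i\tau)^{1/2}e^{\pi i(z^2+1/4)/\tau}$ asserted in the proposition.

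The $T$-transformation is an immediate consequence of $\vartheta_{00}\leftrightarrow\vartheta_{01}$, $\vartheta_{10}\mapsto e^{\pi i/4}\vartheta_{10}$, $\vartheta_{11}\mapsto e^{\pi i/4}\vartheta_{11}$: the $00/01$ swap acting on the theta constant $\vartheta_{ab}(\tau,0)$ interchanges the $f$- and $h$-formulas, while the $g$-formulas (whose constant is $\vartheta_{10}(\tau,0)$) are fixed up to a phase, accounting for $g_3\leftrightarrow g_4$ once the shift $q^{-1/8}\mapsto e^{-\pi i/4}q^{-1/8}$ is included. The entire proof is therefore pure bookkeeping; the main obstacle is simply tracking the numerous phases (factors of $i$, $(-i\tau)^{1/2}$, $e^{\pi i/4}$ and the $q^{-1/8}$ anomaly for $g_i$) without error. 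I would streamline the verification by handling the four $f_i$ under $S$ first, deducing the four $h_i$ via the $T$-identities, and treating the four $g_i$ separately because of their anomalous $q^{-1/8}$ term.
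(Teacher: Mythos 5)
Your proposal is correct and coincides with the paper's (only implicitly indicated) proof: the paper simply states that the transformations are "computed by using the modular transformation properties of $\vartheta_{ab}$ in Mumford's book," i.e.\ exactly the term-by-term substitution into the closed forms of Section~4 that you describe, including the $(-i\tau)$-power bookkeeping, the extra $-i$ for $\vartheta_{11}$, and the $q^{-1/8}\mapsto e^{\pi i/(4\tau)}$ anomaly for the $g_i$. The only nit is that your summary phrase ``sends $f_i\to f_i$'' should read that the $f$-family is preserved with $f_3\leftrightarrow f_4$ (coming from $\vartheta_{01}\leftrightarrow\vartheta_{10}$), which your term-by-term computation would produce automatically.
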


\medskip

The asymptotics of these functions as $\tau$ tends to $0$, namely 
the asymptotic behavior when $\tau=iT$ \, $(T>0)$ and $T \rightarrow 0$, 
are given by the following:

\vspace{1mm}

%(line=1483)
%(label=App:prop:2023-507a)
\begin{prop} \,\ 
\label{App:prop:2023-507a}
For $a \in \ccc$, the asymptotics of $f_i(\tau,a\tau)$, $g_i(\tau,a\tau)$
and $h_i(\tau,a\tau)$ as $\tau \downarrow 0$ are as follows: 
{\allowdisplaybreaks
\begin{eqnarray*}
& & \left\{
\begin{array}{rcl}
f_1(\tau, a\tau) &\overset{\substack{\tau \downarrow 0 \\[0.5mm]}}{\sim} &
\frac12 (-i\tau)^{-1} \cot(a\pi)
\\[2mm]
f_2(\tau, a\tau) &\overset{\substack{\tau \downarrow 0 \\[0.5mm]}}{\sim} &
(-i\tau)^{-1} e^{-\frac{\pi i}{4\tau}} \cos(a\pi)
\\[2mm]
f_3(\tau, a\tau) &\overset{\substack{\tau \downarrow 0 \\[0.5mm]}}{\sim} &
(-i\tau)^{-1} e^{-\frac{\pi i}{4\tau}} \sin(a\pi)
\\[2mm]
f_4(\tau, a\tau) &\overset{\substack{\tau \downarrow 0 \\[0.5mm]}}{\sim} &
-\frac12 (-i\tau)^{-1} \tan(a\pi)
\end{array} \right.  \hspace{5mm} \left\{
\begin{array}{rcl}
g_1(\tau, a\tau) &\overset{\substack{\tau \downarrow 0 \\[0.5mm]}}{\sim} &
\frac12 (-i\tau)^{-1} \cot(a\pi)
\\[2mm]
g_2(\tau, a\tau) &\overset{\substack{\tau \downarrow 0 \\[0.5mm]}}{\sim} &
-(-i\tau)^{-1} e^{-\frac{\pi i}{4\tau}} \cos(a\pi)
\\[2mm]
g_3(\tau, a\tau) &\overset{\substack{\tau \downarrow 0 \\[0.5mm]}}{\sim} &
-\frac12 (-i\tau)^{-1} \tan(a\pi)
\\[2mm]
g_4(\tau, a\tau) &\overset{\substack{\tau \downarrow 0 \\[0.5mm]}}{\sim} &
(-i\tau)^{-1} e^{-\frac{\pi i}{4\tau}} \sin(a\pi)
\end{array} \right.
\\[2mm]
& & \left\{
\begin{array}{rcl}
h_1(\tau, a\tau) &\overset{\substack{\tau \downarrow 0 \\[0.5mm]}}{\sim} &
(-i\tau)^{-1} \cdot \dfrac{1}{2 \sin(a\pi)}
\\[3.5mm]
h_2(\tau, a\tau) &\overset{\substack{\tau \downarrow 0 \\[0.5mm]}}{\sim} &
(-i\tau)^{-1} \cdot \dfrac{1}{2 \cos(a\pi)}
\\[2mm]
h_3(\tau, a\tau) &\overset{\substack{\tau \downarrow 0 \\[0.5mm]}}{\sim} &
\frac12(-i\tau)^{-\frac12} 
\\[2mm]
h_4(\tau, a\tau) &\overset{\substack{\tau \downarrow 0 \\[0.5mm]}}{\sim} &
\frac12 (-i\tau)^{-\frac12} 
\end{array} \right.
\end{eqnarray*}}
\end{prop}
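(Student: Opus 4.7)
The plan is to apply the $S$-transformation formulas of the preceding proposition, read in the direction $\tau\mapsto\sigma:=-1/\tau$, so that the behavior as $\tau\downarrow 0$ is reduced to the behavior of the same functions as $\sigma\to i\infty$. Substituting $\tau\to\sigma$ and $z\to -a$ into each $S$-identity yields an expression of the form
\[
f_i(\tau,a\tau)\;=\;e^{\pi ia^2/\sigma}\bigl\{\sigma\,f_i(\sigma,-a)\,+\,c_1\,\sigma\,\vartheta_{ab}(\sigma,-a)\,+\,c_2\,(-i\sigma)^{1/2}\,\vartheta_{ab}(\sigma,-a)\bigr\},
\]
and analogously for the $g_i,h_i$ families (where $g_i$ or $h_i$ may replace $f_i$ on the right). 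Since $\tau\downarrow 0$ is $\sigma\to i\infty$, i.e.\ $q_\sigma:=e^{2\pi i\sigma}\to 0$, and the prefactor $e^{\pi ia^2/\sigma}\to 1$, the leading asymptotics can be read off directly from the $q_\sigma$-expansion of each piece.

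For $f_1,g_1,h_1,h_2$ the $j=0$ term of the defining series gives a finite value such as $\tfrac{1}{1-e^{-2\pi ia}}$; combining with the correction $-\tfrac{\sigma}{2}\vartheta_{00}(\sigma,-a)\to -\tfrac{\sigma}{2}$ and using $\tfrac{1+e^{-2\pi ia}}{1-e^{-2\pi ia}}=-i\cot(\pi a)$ produces the stated $\tfrac12(-i\tau)^{-1}\cot(\pi a)$ (and its $\tfrac{1}{2\sin(\pi a)}$-analogs), with the $(-i\sigma)^{1/2}$-term sub-leading. For $f_2,f_3,g_2,g_4$ the $j=0$ contribution to $f_i(\sigma,-a)$ is itself proportional to $q_\sigma^{1/8}=e^{-\pi i/(4\tau)}$, and a short trigonometric computation combining it with $\vartheta_{11}(\sigma,-a)\sim -2q_\sigma^{1/8}\sin(\pi a)$ produces the $\cos(\pi a)$ or $\sin(\pi a)$ factor in the claim. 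This handles ten of the twelve asymptotics in a routine way.

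The main obstacle is $h_3$ and $h_4$. Here the $S$-formula features $g_3,g_4$ on the right, and both $g_i(\sigma,-a)$ and the correction $\sigma\,q_\sigma^{-1/8}\,\vartheta_{ab}(\sigma,-a)$ individually blow up like $q_\sigma^{-1/8}\to\infty$, so the $O(\sigma\,q_\sigma^{-1/8})$ contributions must cancel for the claimed $\tfrac12(-i\tau)^{-1/2}$ to survive. I would verify this using the explicit identity
\[
g_3(\tau,z)-q^{-1/8}\vartheta_{00}(\tau,z)\,=\,-\tfrac{i}{2}\,\vartheta_{10}(\tau,0)\cdot\frac{\vartheta_{10}(\tau,z)\,\vartheta_{11}(\tau,z)}{\vartheta_{01}(\tau,z)}
\]
read off from the definition of $g_3$ in Section~\ref{sec:fi:gi:hi}, together with its $g_4$-analogue: the right-hand side is a product of three $q_\sigma^{1/8}$-order theta factors, hence of order $q_\sigma^{3/8}$, so $\sigma\bigl[g_3(\sigma,-a)-q_\sigma^{-1/8}\vartheta_{00}(\sigma,-a)\bigr]=O(\sigma\,q_\sigma^{3/8})$ is exponentially suppressed as $\tau\downarrow 0$. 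What remains is $\tfrac12(-i\sigma)^{1/2}\vartheta_{00}(\sigma,-a)\to\tfrac12(-i\tau)^{-1/2}$, matching the claim. Throughout, the small-$q_\sigma$ expansions $\vartheta_{00},\vartheta_{01}\to 1$, $\vartheta_{10}(\sigma,z)\sim 2q_\sigma^{1/8}\cos(\pi z)$, $\vartheta_{11}(\sigma,z)\sim -2q_\sigma^{1/8}\sin(\pi z)$ together with $-i\sigma=(-i\tau)^{-1}$ and $(-i\sigma)^{1/2}=(-i\tau)^{-1/2}$ do all the work.
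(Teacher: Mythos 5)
Your argument is correct, but it is not the route the paper takes. The paper's proof is a one-step substitution: it takes the closed-form expressions for $f_i,g_i,h_i$ as ratios and sums of the $\vartheta_{ab}$ from the beginning of Section~\ref{sec:fi:gi:hi} and plugs in the asymptotics \eqref{App:eqn:2023-507a} of $\vartheta_{ab}(\tau,a\tau)$ at $z=a\tau$; e.g.\ for $f_1$ the quotient $\vartheta_{00}(\tau,0)\vartheta_{01}\vartheta_{10}/\vartheta_{11}$ immediately produces $(-i\tau)^{-1}\cdot\frac{2\cos(a\pi)}{-2i\sin(a\pi)}$ and the additive theta term is subleading, with no cancellations to track. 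You instead invert the $S$-transformation identities of the preceding proposition, setting $\sigma=-1/\tau$, $z=-a$, and read off leading behavior from the $q_\sigma$-expansions at the cusp $i\infty$. This is logically sound (I checked several cases, including $f_1$, $f_2$, $h_1$, $h_2$, $h_3$, and the bookkeeping works out), and you correctly isolate the one genuinely delicate point, namely the cancellation of the $O(\sigma q_\sigma^{-1/8})$ terms in $h_3,h_4$, which you resolve with the closed-form identity $g_3-q^{-1/8}\vartheta_{00}=-\tfrac{i}{2}\vartheta_{10}(\tau,0)\vartheta_{10}\vartheta_{11}/\vartheta_{01}=O(q^{3/8})$ --- but note that this step quietly reuses the same closed-form theta expressions the paper substitutes into directly, so you do not actually avoid them. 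One small imprecision: for $h_1,h_2$ the series appearing on the right of the $S$-identity is $g_1,g_2$, where both the $j=0$ and $j=1$ terms contribute at order $q_\sigma^0$ and must be combined with the $\sigma q_\sigma^{-1/8}\vartheta_{10}$ (resp.\ $\vartheta_{11}$) correction to produce $1/(2\sin(a\pi))$ and $1/(2\cos(a\pi))$; your phrase ``the $j=0$ term gives a finite value'' undersells this, though your framework handles it. On balance the paper's substitution is shorter; your derivation has the merit of exhibiting the asymptotics as a direct corollary of the $S$-transformation proposition, at the cost of tracking cancellations.
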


\vspace{-1mm}

\begin{proof} \,\ These are obtained by easy calculation 
using the asymptotic behavior of the Mumford's theta functions 
given by the following :
\begin{equation} \left\{
\begin{array}{rcl}
\vartheta_{00}(\tau, \, a\tau) 
&\overset{\substack{\tau \downarrow 0 \\[0.5mm]}}{\sim}& 
(-i\tau)^{-\frac12}
\\[2mm]
\vartheta_{01}(\tau, \, a\tau) 
&\overset{\substack{\tau \downarrow 0 \\[0.5mm]}}{\sim}&
2 \cos (a\pi) \cdot (-i\tau)^{-\frac12} \, e^{-\frac{\pi i}{4\tau}}
\\[2mm]
\vartheta_{10}(\tau, \, a\tau) 
&\overset{\substack{\tau \downarrow 0 \\[0.5mm]}}{\sim}&
(-i\tau)^{-\frac12}
\\[2mm]
\vartheta_{11}(\tau, \, a\tau) 
&\overset{\substack{\tau \downarrow 0 \\[0.5mm]}}{\sim}&
-2 i \sin (a\pi) \cdot (-i\tau)^{-\frac12} \, 
e^{-\frac{\pi i}{4\tau}}
\end{array}\right.
\label{App:eqn:2023-507a}
\end{equation}
\end{proof}

\end{document}